\theoremstyle{plain}
\newcommand{\CC}{\mathbb{C}}
\newcommand{\NN}{\mathbb{N}}
\newcommand{\RR}{\mathbb{R}}
\newcommand{\EE}{\mathbb{E}}
\newcommand{\E}{\mathrsfs{E}}
\newcommand{\B}{\mathrsfs{B}}
\newcommand{\F}{\mathrsfs{F}}
\newcommand{\D}{\mathrsfs{D}}
\newcommand{\Lu}{\mathrsfs{L}}
\newcommand{\St}{\mathrsfs{S}}
\newcommand{\st}{\mathfrak{S}}
\newcommand{\LL}{\mathfrak{L}}
\newcommand{\G}{\mathrsfs{G}}
\newcommand{\HH}{\mathrsfs{H}}
\newcommand{\M}{\mathrsfs{M}}
\newcommand{\N}{\mathrsfs{N}}
\newcommand{\U}{\mathrsfs{U}}
\newcommand{\V}{\mathrsfs{V}}
\newtheorem{definition}{\sc Definition}[section]
\newtheorem{teo}{\sc Theorem}[section]
\newtheorem{prop}{\sc Proposition}[section]
\newtheorem{eje}{\sc Example }[section]
\newtheorem{coro}{\sc Corollary}[section]
\newcounter{myremark}
\newenvironment{remark}{\bigskip\noindent   \refstepcounter{myremark}
  \textbf{Remark \themyremark}
   }{\par\bigskip}
\numberwithin{myremark}{section}
\begin{document}

\title[Euler matrices and their algebraic properties]{Euler matrices and their algebraic properties revisited}

\author[Y. Quintana]{Yamilet Quintana$^{(1)}$}
\address{Departamento de Matem\'aticas Puras y Aplicadas,
Edificio Matem\'aticas y Sistemas (MYS), Apartado Postal: 89000, Caracas 1080 A, Universidad Sim\'on Bol\'{\i}var,
Venezuela} \email{yquintana@usb.ve}
\thanks{$(1)\,\,\,$Partially supported by the grants Impacto Caribe (IC-002627-2015) from Universidad del Atl\'antico, Colombia, and DID-USB (S1-IC-CB-003-16) from Decanato de Investigaci\'on y Desarrollo. Universidad Sim\'on Bol\'{\i}var, Venezuela.}

\author[W. Ram\'{\i}rez]{William Ram\'{\i}rez$^{(2)}$}
\address{Departamento de Ciencias B\'asicas, Universidad de la Costa - CUC, Barranquilla, Colombia.}
\email{wramirez4@cuc.edu.co}
\thanks{$(2)\,\,\,$Partially supported by the grant Impacto Caribe (IC-002627-2015) from Universidad del Atl\'antico, Colombia.}

\author[A. Urieles]{Alejandro Urieles$^{(2)}$}
\address{Programa de Matem\'aticas, Universidad del Atl\'antico,
Km 7 V\'{i}a Pto. Colombia, Barranquilla, Colombia.} \email{alejandrourieles@mail.uniatlantico.edu.co}

%\titlerunning{Euler matrices and their algebraic properties}        % if too long for running head

\subjclass[2010]{11B68, 11B83, 11B39, 05A19.}
\keywords{Euler polynomials, Euler matrix, generalized Euler matrix, generalized Pascal matrix, Fibonacci matrix, Lucas matrix.}

\begin{abstract}
This paper is concerned with the generalized Euler polynomial matrix $\E^{(\alpha)}(x)$ and the Euler matrix $\E$.  Taking into account some properties of Euler polynomials and numbers, we deduce product formulae for  $\E^{(\alpha)}(x)$ and determine the inverse matrix of $\E$. We establish some explicit expressions for the Euler polynomial matrix  $\E(x)$, which involving  the generalized Pascal, Fibonacci and Lucas matrices, respectively. From these formulae we get some new interesting  identities  involving Fibonacci and Lucas numbers. Also, we provide some factorizations of the Euler polynomial matrix in terms of Stirling matrices, as well as, a connection between the shifted Euler matrices and Vandermonde matrices.
\end{abstract}

\maketitle

\section{Introduction}
\label{intro}

The classical Euler polynomials $E_{n}(x)$ and the generalized Euler polynomials $E_{n}^{(\alpha)}(x)$ of (real or complex) order $\alpha$,  are usually defined as follows (see, for details, \cite{AIK2014,N,SCh2012,SM}):

\begin{equation}
\label{euler1}
\displaystyle\left(\frac{2}{e^{z}+1}\right)^{\alpha}e^{xz} =\displaystyle\sum\limits_{n=0}^{\infty}
E_{n}^{(\alpha)}(x)\frac{z^n}{n!}, \quad |z|<\pi, \quad 1^{\alpha}:=1,
\end{equation}
and
\begin{equation}
\label{euler2}
E_{n}(x):= E_{n}^{(1)}(x), \quad n\in \NN_{0},
\end{equation}
where $ \NN_{0}:= \NN\cup\{0\}$ and $\NN=\{1,2,3,\ldots\}$.

The numbers $E_{n}^{(\alpha)}:= E_{n}^{(\alpha)}(0)$ are called generalized Euler numbers of order $\alpha$, $n\in \NN_{0}$. It is well-known that the classical Euler numbers are defined by the generating function
\begin{equation}
\label{euler3}
\frac{2}{e^{z}+e^{-z}}=\sum_{n=0}^{\infty} \varepsilon_{n}\frac{z^{n}}{n!}.
\end{equation}

The sequence $\{\varepsilon_{n}\}_{n\geq 0}$ counts the numbers of alternating $n$-permutations. Let recall us that a permutation  $\sigma$ of  a set of $n$ elements (or $n$-permutation), is said  alternating if and only if  the $n-1$ differences $\sigma(i+1)-\sigma(i)$ for $i=1,2,\ldots, n-1$ have alternating signs (cf. \cite[p. 258]{C1974}). From \eqref{euler2} and \eqref{euler3} it is easy to check that the connection between the classical Euler numbers and the Euler polynomials is given by the formula
\begin{equation}
\label{euler4}
\varepsilon_{n}=2^{n}E_{n}\left(\frac{1}{2}\right), \quad n\in\NN_{0}.
\end{equation}

So, the numbers $E_{n}:=E_{n}(0)$ also are known in the literature as Euler numbers (cf., e.g., \cite{LS2005,SCh2012}).

The first six generalized Euler polynomials are
$$\begin{aligned}
E_{0}^{(\alpha)}(x)=& \,1,\quad E_{1}^{(\alpha)}(x)=x-\frac{\alpha}{2},\quad E_{2}^{(\alpha)}(x)= x^{2}-\alpha x+\frac{\alpha(\alpha-1)}{4},\\
E_{3}^{(\alpha)}(x)=& \, x^{3}-\frac{3\alpha}{2}x^{2}+\frac{3\alpha(\alpha-1)}{4}x-\frac{3\alpha^{2}(\alpha-1)}{8}, \\
E_{4}^{(\alpha)}(x)=&\, x^{4}-2\alpha x^{3}+\frac{3\alpha(\alpha-1)}{2}x^{2}-\frac{\alpha^{2}(\alpha-3)}{2}x
+\frac{\alpha(\alpha^{3}-6\alpha^{2}+3\alpha-26)}{16},\\
E_{5}^{(\alpha)}(x)=& \, x^{5}-\frac{5\alpha}{2}x^{4}+\frac{5\alpha(\alpha-1)}{2}x^{3}-\frac{5\alpha^{2}(\alpha-3)}{4}x^{2}
+\frac{5\alpha(\alpha-1)(\alpha^{2}-5\alpha-2)}{16}x  \\
&-\frac{\alpha^{2}(\alpha^{3}-10\alpha^{2}+15\alpha+10)}{32}.
\end{aligned}$$

Recent  and  interesting works dealing with these polynomials, Appell and Apostol type polynomials, their properties and applications in several areas as such as combinatorics, number theory,  numerical analysis and  partial differential equations,
can be found by reviewing  the  current literature on this subject. For a broad information on old  literature and
new research trends about these classes of polynomials we strongly recommend to the interested reader see \cite{C1974,HAS2016,HASA2015,HQU2015,LS2005,N,PS2013,QRU,Rio68,SBR2018,SCh2012,SKS2017,SMR2018,SOK2013,SOY2014,SP2003}.

From the generating relation \eqref{euler1}, it is fairly straightforward to deduce the addition formula:
\begin{equation}
\label{euler6}
E_{n}^{(\alpha+\beta)}(x+y)= \sum_{k=0}^{n}\binom{n}{k}E_{k}^{(\alpha)}(x)E_{n-k}^{(\beta)}(y).
\end{equation}

And, it follows also that
\begin{equation}
\label{euler11}
E_{n}^{(\alpha)}(x+1)+ E_{n}^{(\alpha)}(x) =2 E_{n}^{(\alpha-1)}(x).
\end{equation}
Since $E_{n}^{(0)}(x)=x^{n}$, making the substitution $\beta=0$ into  \eqref{euler6} and interchanging $x$ and $y$, we get
\begin{equation}
\label{euler7}
E_{n}^{(\alpha)}(x+y)= \sum_{k=0}^{n}\binom{n}{k}E_{k}^{(\alpha)}(y)x^{n-k}.
\end{equation}
And, as an immediate consequence, we have
\begin{eqnarray}
\label{euler8}
E_{n}(x+y)&=& \sum_{k=0}^{n}\binom{n}{k}E_{k}(y)x^{n-k},\\
\label{euler9}
E_{n}(x)&=&\sum_{k=0}^{n}\binom{n}{k}E_{k}\,x^{n-k}.
\end{eqnarray}
Using \eqref{euler4}, \eqref{euler8} and the well-known relation $E_{n}(1-x)=(-1)^{n}E_{n}(x)$, it is possible to deduce the following  connection formula  between $E_{n}$ and the classical Euler numbers $\varepsilon_{n}$:
\begin{equation}
\label{euler5}
E_{n}=\left\{ \begin{array}{l}-\frac{1}{2^{n}}\sum_{k=0}^{n}\binom{n}{k}\varepsilon_{n-k},\quad \mbox{ if } n \mbox{ is odd},\\
\\
0, \quad \mbox{ if } n \mbox{ is even}.
\end{array}\right.
\end{equation}

Inspired by the article \cite{ZW2006} in which the authors introduce the generalized Bernoulli matrix and  establish some algebraic properties of the Bernoulli polynomial and Bernoulli matrices, in the present article  we focus our attention on the  algebraic and differential properties of the  generalized Euler matrix. It is worthwhile to mention that the authors of \cite{ZW2006} to point out that their proposed methodology can be used for obtaining similar properties in the setting of the generalized Euler matrix. However, the authors of \cite{ZW2006} do not actually write out any proof about  this statement.

The outline of the paper is as follows. Section \ref{sec:1}  has an auxiliary character and provides some background as well as some  results which will be used throughout the paper. Making use of the some identities above, we introduce the generalized Euler matrix  in  Section \ref{sec:2}. Then, we study some interesting particular cases of this matrix, namely, the Euler polynomial matrix, the Euler matrix and the specialized Euler matrix. The main results of this section are Theorems \ref{teogeneuler1}, \ref{teogeneuler2}, \ref{teogeneuler3} and  \ref{teogeneuler4}, because these theorems contain the information concerning the product formula for the Euler matrix, an explicit expression for the inverse matrix of  the specialized Euler  matrix, the factorization of the Euler matrix via the generalized Pascal matrix of first kind, and a useful factorization for the inverse matrix of a particular ``horizontal sliding''  of the Euler polynomial matrix, respectively. Also, some consequences of these results are showed (see for instance, Corollaries \ref{corgeneuler1}, \ref{corgeneuler2}, \ref{corgeneuler3} and \ref{corgeneuler4}).  Section \ref{sec:3} shows several factorizations of the generalized Euler matrix in terms the Fibonacci and Lucas matrices, respectively  (cf. Theorems \ref{teogeneuler5} and \ref{teogeneuler6}). Also, some new identities  involving Fibonacci and Lucas numbers are given in this section. Finally, in Section \ref{sec:5} we provide some factorizations of the Euler polynomial matrix in terms of Stirling matrices, and  the shifted Euler matrices and their connection with Vandermonde matrices are given.

\section{Background and previous results}
\label{sec:1}

Throughout this paper, all matrices are in $M_{n+1}(\RR)$, the set of all $(n+1)$-square matrices over the real field. Also, for $i,j$ any nonnegative integers we adopt the following convention
$$\binom{i}{j}=0, \mbox{ whenever }  j>i.$$

In this section we recall the definitions of the generalized Pascal matrix, the Fibonacci matrix and the Lucas matrix, as well as, some properties of these matrices.

\begin{definition}
\label{defi1}
Let $x$ be any nonzero real number. The generalized Pascal matrix of first kind $P[x]$ is an $(n+1)\times(n+1)$ matrix whose entries are given by (see \cite{CV1993,Z1997}):
\begin{equation}
\label{pascal1}
p_{i,j}(x)=\left\{\begin{array}{l}
\binom{i}{j}x^{i-j}, \quad i\geq j,\\
\\
0, \quad \mbox{otherwise}.
\end{array}\right.
\end{equation}
\end{definition}

In \cite{CV1993,Z1997,Z1998} some properties of the  generalized Pascal matrix of first kind are showed, for example,  its matrix factorization by special summation matrices, its associated differential equation and  its bivariate extensions. The following proposition summarizes some algebraic and differential properties of $P[x]$.

\begin{prop}
Let $P[x]$ be the generalized Pascal matrix of first kind and order $n+1$. Then the following
statements hold.
\begin{enumerate}
\item[(a)] Special value. If the convention $0^{0}=1$ is adopted, then it is possible to define
\begin{equation}
\label{pascal2}
P[0]:= I_{n+1}={\rm diag}(1,1,\ldots,1),
\end{equation}
where $I_{n+1}$ denotes the identity matrix of order $n+1$.
\item[(b)] $P[x]$ is an invertible matrix and its inverse is given by
\begin{equation}
\label{pascal3}
P^{-1}[x]:=\left(P[x]\right)^{-1}= P[-x].
\end{equation}
\item[(c)] \cite[Theorem 2]{CV1993} Addition theorem of the argument. For $x,y\in \RR$ we have
\begin{equation}
\label{pascal4}
P[x+y]= P[x]P[y].
\end{equation}
\item[(d)] \cite[Theorem 5]{CV1993} Differential relation (Appell type polynomial entries). $P[x]$ satisfies the following differential equation
\begin{equation}
\label{pascal5}
 D_{x}P[x]= \LL P[x]= P[x]
 \LL,
\end{equation}
where $D_{x}P[x]$ is the matrix resulting from taking the derivative with respect to $x$ of each entry of $P[x]$ and the entries of the $(n+1)\times(n+1)$ matrix $\LL$ are given by
$$\begin{aligned}
{\rm l}_{i,j}=&\left\{\begin{array}{l}
p_{i,j}'(0), \quad i\geq j,\\
\\
0, \quad \mbox{otherwise},
\end{array}\right.\\
=&\left\{\begin{array}{l}
j+1, \quad i=j+1,\\
\\
0, \quad \mbox{otherwise}.
\end{array}\right.
\end{aligned}$$
\item[(e)] (\cite[Theorem 1]{Z1997}) The matrix $P[x]$ can be factorized as follows.
\begin{equation}
\label{pascal6}
P[x]=G_{n}[x]G_{n-1}[x]\cdots G_{1}[x],
\end{equation}
where $G_{k}[x]$ is the $(n+1)\times(n+1)$ summation matrix given by
$$\begin{aligned}
G_{k}[x]=& \left\{\begin{array}{l}
\begin{bmatrix}
I_{n-k}&0\\
0&S_{k}[x]
\end{bmatrix}, \quad k=1,\ldots, n-1,\\
\\
S_{n}[x],  \quad k=n,
\end{array}\right.
\end{aligned}$$
being $S_{k}[x]$ the $(k+1)\times(k+1)$ matrix whose entries $S_{k}(x;i,j)$ are given by
$$\begin{aligned}
S_{k}(x;i,j)=&\left\{\begin{array}{l}
x^{i-j}, \quad j\leq i,\\
\\
0, \quad  j>i,
\end{array}\right. \quad (0\leq i,j\leq k).
\end{aligned}$$
\end{enumerate}
\end{prop}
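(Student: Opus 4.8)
The plan is to derive the five items from the single computational fact in part~(c), proving them in the order (a), (c), (b), (d), (e) rather than as listed. First, part~(a) is immediate from \eqref{pascal1}: setting $x=0$, the entry $p_{i,j}(0)=\binom{i}{j}0^{\,i-j}$ vanishes for $i>j$, equals $\binom{i}{i}0^{0}=1$ for $i=j$ under the convention $0^{0}=1$, and is already $0$ for $i<j$, so $P[0]=I_{n+1}$. For the addition theorem~(c) I would compute the $(i,j)$ entry of $P[x]P[y]$ as $\sum_{k}\binom{i}{k}x^{\,i-k}\binom{k}{j}y^{\,k-j}$, the sum effectively running over $j\le k\le i$ by triangularity. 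The decisive step is the subset-of-a-subset identity $\binom{i}{k}\binom{k}{j}=\binom{i}{j}\binom{i-j}{k-j}$, which factors out $\binom{i}{j}$ and, after reindexing by $m=k-j$, leaves $\binom{i}{j}\sum_{m=0}^{i-j}\binom{i-j}{m}x^{\,(i-j)-m}y^{\,m}=\binom{i}{j}(x+y)^{\,i-j}=p_{i,j}(x+y)$ by the binomial theorem. Hence $P[x]P[y]=P[x+y]$.

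With (a) and (c) available, part~(b) is a one-line consequence: $P[x]P[-x]=P[x+(-x)]=P[0]=I_{n+1}$, so $P[x]$ is invertible with $P^{-1}[x]=P[-x]$. For the differential relation~(d) I would argue entrywise. Differentiating \eqref{pascal1} gives $(D_{x}P[x])_{i,j}=(i-j)\binom{i}{j}x^{\,i-j-1}$. Because $\LL$ has its only nonzero entries on the first subdiagonal, the $(i,j)$ entry of $\LL P[x]$ collapses to $i\,p_{i-1,j}(x)=i\binom{i-1}{j}x^{\,i-1-j}$, and the identity $i\binom{i-1}{j}=(i-j)\binom{i}{j}$ shows the two agree. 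Symmetrically, $P[x]\LL$ collapses to $(j+1)\,p_{i,j+1}(x)$, and $(j+1)\binom{i}{j+1}=(i-j)\binom{i}{j}$ again matches the derivative, establishing $D_{x}P[x]=\LL P[x]=P[x]\LL$.

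Part~(e) is the delicate one, and I expect the bookkeeping of the nested blocks to be the main obstacle. I would proceed by induction on $n$. Each factor $G_{k}[x]$ with $k\le n-1$ has first row and first column equal to those of the identity, so the trailing product $G_{n-1}[x]\cdots G_{1}[x]$ has the block form ${\rm diag}(1,M)$; inspecting the bottom-right $n\times n$ blocks shows that these factors are exactly the factorization data for the order-$n$ Pascal matrix, so by the inductive hypothesis $M=P_{n-1}[x]$, the $n\times n$ generalized Pascal matrix. It then remains to verify $P[x]=S_{n}[x]\,{\rm diag}(1,P_{n-1}[x])$ directly: the $(i,j)$ entry with $j\ge 1$ is $x^{\,i-j}\sum_{k=j}^{i}\binom{k-1}{j-1}$, and the hockey-stick identity $\sum_{k=j}^{i}\binom{k-1}{j-1}=\binom{i}{j}$ collapses this to $\binom{i}{j}x^{\,i-j}=p_{i,j}(x)$ (the column $j=0$ being checked separately). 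Since parts (c)--(e) coincide with \cite[Theorem 2]{CV1993}, \cite[Theorem 5]{CV1993} and \cite[Theorem 1]{Z1997}, one may alternatively simply transcribe those arguments into the present notation; the self-contained route above has the advantage of resting only on elementary binomial identities.
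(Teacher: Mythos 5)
Your proofs are all correct, but note that the paper itself does not prove this proposition: it is presented as background, with (c) and (d) cited to \cite[Theorems 2 and 5]{CV1993} and (e) to \cite[Theorem 1]{Z1997}, while (a) and (b) are stated without comment. What you have done is reconstruct, essentially verbatim, the standard arguments of those references: the entrywise computation of $P[x]P[y]$ via the subset-of-a-subset identity $\binom{i}{k}\binom{k}{j}=\binom{i}{j}\binom{i-j}{k-j}$ followed by the binomial theorem is exactly Call--Velleman's proof of the addition theorem; the collapse of $\LL P[x]$ to $i\,p_{i-1,j}(x)$ and of $P[x]\LL$ to $(j+1)\,p_{i,j+1}(x)$, matched to the derivative through $i\binom{i-1}{j}=(j+1)\binom{i}{j+1}=(i-j)\binom{i}{j}$, is their proof of the differential relation; and the induction on $n$ reducing (e) to the single verification $P[x]=S_{n}[x]\,{\rm diag}(1,P_{n-1}[x])$, settled by the hockey-stick identity $\sum_{k=j}^{i}\binom{k-1}{j-1}=\binom{i}{j}$, is Zhang's argument. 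Your reordering (a), (c), (b) is a small but genuine economy over treating (b) independently: invertibility falls out of the addition theorem and $P[0]=I_{n+1}$ with no further computation, the one-sided identity $P[x]P[-x]=I_{n+1}$ sufficing since the matrices are square. Two cosmetic omissions, neither affecting correctness: you do not state the base case of the induction in (e) (for $n=1$ it is the trivial identity $P[x]=G_{1}[x]=S_{1}[x]$), and in (d) you should remark that the expression $(i-j)\binom{i}{j}x^{i-j-1}$ is to be read as $0$ when $i=j$, so that the diagonal entries also agree on the nose.
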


Another necessary structured  matrices in what follows, are the Fibonacci and Lucas matrices. Below, we recall the definitions of each one of them.

\begin{definition}
\label{defi2}
Let $\{F_{n}\}_{n\geq 1}$ be the Fibonacci sequence, i.e.,  $F_{n}=F_{n-1}+F_{n-2}$ for $n\geq 2$ with initial conditions $F_{0}=0$ and $F_{1}=1$. The Fibonacci matrix $\F$ is an $(n+1)\times(n+1)$  matrix whose entries are given by \cite{LKL}:
\begin{equation}
\label{fibo2}
f_{i,j}=\left\{\begin{array}{l}
F_{i-j+1}, \quad i-j+1\geq 0,\\
\\
0, \quad i-j+1< 0.
\end{array}\right.
\end{equation}
\end{definition}

Let $\F^{-1}$ be the inverse of $\F$ and denote by $\tilde{f}_{i,j}$ the entries of $\F^{-1}$. In \cite{LKL} the authors obtained the following explicit expression for  $\F^{-1}$.

\begin{equation}
\label{fibo1}
 \begin{aligned}
\tilde{f}_{i,j}=&\left\{\begin{array}{l}
1, \quad i=j,\\
\\
-1, \quad i=j+1, j+2,\\
\\
0, \quad \mbox{otherwise}.
\end{array}\right.
\end{aligned}
\end{equation}

\begin{definition}
\label{defi3}
Let $\{L_{n}\}_{n\geq 1}$ be the Lucas sequence, i.e.,  $L_{n+2}=L_{n+1}+L_{n}$ for $n\geq 1$ with initial conditions $L_{1}=1$ and $L_{2}=3$. The Lucas matrix $\Lu$ is an $(n+1)\times(n+1)$  matrix whose entries are given by \cite{ZZ2007}:

\begin{equation}
\label{lucas}
l_{i,j}=\left\{\begin{array}{l}
L_{i-j+1}, \quad i-j\geq 0,\\
\\
0, \quad \mbox{otherwise}.
\end{array}\right.
\end{equation}
\end{definition}

Let $\Lu^{-1}$ be the inverse of $\Lu$ and denote by $\tilde{l}_{i,j}$ the entries of $\Lu^{-1}$. In \cite[Theorem 2.2]{ZZ2007} the authors obtained the following explicit expression for  $\Lu^{-1}$.

\begin{equation}
\label{lucas1}
 \begin{aligned}
\tilde{l}_{i,j}=&\left\{\begin{array}{l}
1, \quad i=j,\\
\\
-3, \quad i=j+1, \\
\\
5(-1)^{i-j}2^{i-j-2}, \quad i\geq j+2,\\
\\
0, \quad \mbox{otherwise}.
\end{array}\right.
\end{aligned}
\end{equation}

For $x$  any nonzero real number, the following relation between the matrices $P[x]$ and $\Lu$ was stated and proved in \cite[Theorem 3.1]{ZZ2007}.
\begin{equation}
\label{lucas2}
P[x]=\Lu \G[x] =\HH[x]\Lu,
\end{equation}
where  the entries of the $(n+1)\times(n+1)$  matrices $\G[x]$ and $\HH[x]$ are given by
$$\begin{aligned}
g_{i,j}(x)=& x^{-j-1}\left[x^{i+1} \binom{i}{j} -3x^{i}\binom{i-1}{j}+ 5(-1)^{i+1} 2^{i-1}m_{i-1, j+1}\left(\frac{x}{2}\right)\right],\\
\\
h_{i,j}(x)=&x^{-j-1}\left[x^{i+1} \binom{i}{j} -3x^{i}\binom{i}{j+1}+ (-1)^{j+1}\frac{5x^{i+j+2}}{2^{j+3}} n_{i+1, j+3}\left(\frac{2}{x}\right)\right],
\end{aligned}$$
respectively, with
$$\begin{aligned}
m_{i,j}(x):=& \left\{\begin{array}{l}
\sum_{k=j}^{i}(-1)^{k}\binom{k}{j}x^{k}, \quad i\geq j,\\
\\
0, \quad i<j,
\end{array}\right.
\end{aligned}  \quad \mbox{ and } \quad
\begin{aligned}
n_{i,j}(x):=& \left\{\begin{array}{l}
\sum_{k=j}^{i}(-1)^{k}\binom{i}{k}x^{k}, \quad i\geq j,\\
\\
0, \quad i<j.
\end{array}\right.
\end{aligned}$$

\section{The generalized Euler matrix}
\label{sec:2}
\begin{definition}
\label{def3}
The generalized $(n+1)\times(n+1)$ Euler matrix $\E^{(\alpha)}(x)$ is defined by
\begin{equation}
\label{euler10}
E^{(\alpha)}_{i,j}(x)=
\left\{\begin{array}{l}
\binom{i}{j} E^{(\alpha)}_{i-j}(x), \quad i\geq j,\\
\\
0, \quad \mbox{otherwise}.
\end{array}\right.
\end{equation}
While, $\E(x):= \E^{(1)}(x)$ and $\E:= \E(0)$ are called the Euler polynomial matrix and the Euler matrix, respectively. In the particular case $x=\frac{1}{2}$, we call the matrix $\EE:=\E\left(\frac{1}{2}\right)$ specialized Euler matrix.
\end{definition}

It is clear that \eqref{euler11} yields the following matrix identity:
\begin{equation}
\label{geneuler5}
\E^{(\alpha)}(x+1)+ \E^{(\alpha)}(x)= 2\E^{(\alpha-1)}(x).
\end{equation}

Since $\E^{(0)}(x)=P[x]$, replacing $\alpha$ by $1$ in \eqref{geneuler5} we have
\begin{equation}
\label{euler20}
\E(x+1)+ \E(x)= 2P[x].
\end{equation}

Then, putting $x=0$ in \eqref{euler20} and taking into account \eqref{pascal2}, we get
$$\E(1)+ \E= 2I_{n+1}.$$
Analogously,
$$\E+ \E(-1)= 2P\left[-\frac{1}{2}\right].$$

From \eqref{euler4} it follows that the entries of the specialized Euler matrix $\EE$ are given by
\begin{equation}
\label{euler12}
e_{i,j}=
\left\{\begin{array}{l}
\binom{i}{j} 2^{j-i}\varepsilon_{i-j}, \quad i\geq j,\\
\\
0, \quad \mbox{otherwise}.
\end{array}\right.
\end{equation}

From \eqref{euler5} it follows that the entries of the Euler matrix $\E$ are given by
\begin{equation}
\label{euler18}
E_{i,j}=
\left\{\begin{array}{l}
\binom{i}{j} E_{i-j}, \quad i>j \mbox{ and } i-j  \mbox{ odd},\\
\\
1, \quad i=j,\\
\\
0, \quad \mbox{otherwise}.
\end{array}\right.
\end{equation}

The next  result is an immediate consequence of Definition \ref{def3} and the addition formula \eqref{euler6}.

\begin{teo}
\label{teogeneuler1}
The generalized Euler matrix $\E^{(\alpha)}(x)$ satisfies the following product formula.
\begin{equation}
\label{geneuler1}
\E^{(\alpha+\beta)}(x+y)= \E^{(\alpha)}(x)\,\E^{(\beta)}(y)= \E^{(\beta)}(x)\,\E^{(\alpha)}(y)= \E^{(\alpha)}(y)\, \E^{(\beta)}(x).
\end{equation}
\end{teo}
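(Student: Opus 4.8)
The plan is to verify the product formula $\E^{(\alpha+\beta)}(x+y)= \E^{(\alpha)}(x)\,\E^{(\beta)}(y)$ by computing the $(i,j)$-entry of the matrix product on the right and matching it against the definition \eqref{euler10} of the left-hand side, using the addition formula \eqref{euler6}. I would first fix indices $i,j$ with $i\geq j$ (the case $i<j$ being trivial, since all matrices involved are lower triangular and the product of lower triangular matrices is lower triangular). By the matrix multiplication rule, the $(i,j)$-entry of $\E^{(\alpha)}(x)\,\E^{(\beta)}(y)$ is
\begin{equation}
\sum_{k=0}^{n}E^{(\alpha)}_{i,k}(x)\,E^{(\beta)}_{k,j}(y)=\sum_{k=j}^{i}\binom{i}{k}E^{(\alpha)}_{i-k}(x)\binom{k}{j}E^{(\beta)}_{k-j}(y),
\end{equation}
where the range of summation collapses to $j\leq k\leq i$ because $E^{(\alpha)}_{i,k}(x)=0$ for $k>i$ and $E^{(\beta)}_{k,j}(y)=0$ for $k<j$.

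The key step is to simplify the binomial coefficients. Using the standard identity $\binom{i}{k}\binom{k}{j}=\binom{i}{j}\binom{i-j}{k-j}$, I would factor out $\binom{i}{j}$ and reindex the sum by setting $\ell=k-j$, so that $\ell$ runs from $0$ to $i-j$. This turns the inner sum into
\begin{equation}
\binom{i}{j}\sum_{\ell=0}^{i-j}\binom{i-j}{\ell}E^{(\alpha)}_{(i-j)-\ell}(x)\,E^{(\beta)}_{\ell}(y).
\end{equation}
Now the summation is exactly the right-hand side of the addition formula \eqref{euler6} with $n$ replaced by $i-j$, which evaluates to $E^{(\alpha+\beta)}_{i-j}(x+y)$. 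Hence the $(i,j)$-entry equals $\binom{i}{j}E^{(\alpha+\beta)}_{i-j}(x+y)$, which is precisely $E^{(\alpha+\beta)}_{i,j}(x+y)$ by \eqref{euler10}. This establishes the first equality.

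For the remaining equalities, I would observe that the left-hand side $\E^{(\alpha+\beta)}(x+y)$ is manifestly symmetric under simultaneously swapping the pairs $(\alpha,x)\leftrightarrow(\beta,y)$, since $\alpha+\beta=\beta+\alpha$ and $x+y=y+x$. Applying the first equality with the roles of the two factors interchanged immediately gives $\E^{(\alpha+\beta)}(x+y)=\E^{(\beta)}(y)\,\E^{(\alpha)}(x)$, and comparing constituents yields the chain in \eqref{geneuler1}; in particular the matrices $\E^{(\alpha)}(x)$ and $\E^{(\beta)}(y)$ commute. I do not expect any genuine obstacle here: the only point demanding care is the manipulation of the summation index and the binomial identity, together with confirming that the truncations of the sum are harmless. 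The heart of the argument is entirely contained in recognizing the Cauchy-type convolution as an instance of \eqref{euler6}.
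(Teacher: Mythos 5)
Your proof is correct and takes essentially the same approach as the paper: both compute the $(i,j)$-entry of $\E^{(\alpha)}(x)\,\E^{(\beta)}(y)$, collapse the summation range by lower triangularity, apply the identity $\binom{i}{k}\binom{k}{j}=\binom{i}{j}\binom{i-j}{k-j}$ with a reindexing, and recognize the resulting convolution as the addition formula \eqref{euler6}. Your symmetry argument for the remaining equalities is just an explicit version of the paper's remark that they ``can be derived in a similar way.''
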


\begin{proof}
We proceed as in the proof of  \cite[Theorem 2.1]{ZW2006}, making the corresponding modifications. Let $A_{i,j}^{(\alpha,\beta)}(x,y)$ be the $(i,j)$-th entry of the matrix product $ \E^{(\alpha)}(x)\,\E^{(\beta)}(y)$, then by the addition formula \eqref{euler6} we have
$$\begin{aligned}
A_{i,j}^{(\alpha,\beta)}(x,y)=&\sum_{k=0}^{n}\binom{i}{k}E_{i-k}^{(\alpha)}(x)\binom{k}{j}E_{k-j}^{(\beta)}(y)\\
=&\sum_{k=j}^{i}\binom{i}{k}E_{i-k}^{(\alpha)}(x)\binom{k}{j}E_{k-j}^{(\beta)}(y)\\
=& \sum_{k=j}^{i}\binom{i}{j}\binom{i-j}{i-k}E_{i-k}^{(\alpha)}(x)E_{k-j}^{(\beta)}(y)\\
=&\binom{i}{j}\sum_{k=0}^{i-j}\binom{i-j}{k}E_{i-j-k}^{(\alpha)}(x)E_{k}^{(\beta)}(y)\\
=& \binom{i}{j}E_{i-j}^{(\alpha)}(x+y),
\end{aligned}$$
which implies the first equality of \eqref{geneuler1}. The second and third equalities of \eqref{geneuler1} can be derived in a similar way.
\end{proof}

\begin{coro}
\label{corgeneuler1}
Let $(x_{1},\ldots,x_{k})\in \RR^{k}$. For $\alpha_{j}$ real or complex parameters, the Euler matrices $\E^{(\alpha_{j})}(x)$ satisfies the following product formula, $j=1,\ldots, k$.
\begin{equation}
\label{geneuler2}
\E^{(\alpha_{1}+\alpha_{2}+\cdots+\alpha_{k})}(x_{1}+x_{2}+\cdots+x_{k})= \E^{(\alpha_{1})}(x_{1})\,\E^{(\alpha_{2})}(x_{2})\,\cdots\, \E^{(\alpha_{k})}(x_{k}).
\end{equation}
\end{coro}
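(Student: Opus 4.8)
The plan is to prove \eqref{geneuler2} by induction on $k$, using Theorem \ref{teogeneuler1} as the engine. The base case $k=1$ is a tautology, and the case $k=2$ is precisely the first equality
$$\E^{(\alpha_{1}+\alpha_{2})}(x_{1}+x_{2})= \E^{(\alpha_{1})}(x_{1})\,\E^{(\alpha_{2})}(x_{2})$$
in the product formula \eqref{geneuler1}, obtained by setting $\alpha=\alpha_{1}$, $\beta=\alpha_{2}$, $x=x_{1}$ and $y=x_{2}$. So the statement is anchored by the theorem just proved.

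For the inductive step, I would assume that \eqref{geneuler2} holds for some fixed $k\geq 2$ and all choices of the parameters and arguments, and then establish it for $k+1$. First I would invoke associativity of matrix multiplication to write
$$\E^{(\alpha_{1})}(x_{1})\,\cdots\,\E^{(\alpha_{k})}(x_{k})\,\E^{(\alpha_{k+1})}(x_{k+1})
=\Bigl(\E^{(\alpha_{1})}(x_{1})\,\cdots\,\E^{(\alpha_{k})}(x_{k})\Bigr)\E^{(\alpha_{k+1})}(x_{k+1}).$$
Applying the induction hypothesis to the bracketed product of the first $k$ factors collapses it to the single matrix $\E^{(\alpha_{1}+\cdots+\alpha_{k})}(x_{1}+\cdots+x_{k})$. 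At that point the right-hand side is a product of exactly two generalized Euler matrices, so a final application of the $k=2$ case (that is, of Theorem \ref{teogeneuler1}) with combined order $\alpha_{1}+\cdots+\alpha_{k}$ and combined argument $x_{1}+\cdots+x_{k}$ in the first factor yields $\E^{(\alpha_{1}+\cdots+\alpha_{k}+\alpha_{k+1})}(x_{1}+\cdots+x_{k}+x_{k+1})$, which closes the induction.

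There is no genuine obstacle here: the corollary is a routine iteration of the bilinear product rule, and the only facts used beyond Theorem \ref{teogeneuler1} are associativity of matrix multiplication and the fact that the orders $\alpha_{j}$ and arguments $x_{j}$ may be arbitrary real or complex (which is exactly the generality in which \eqref{geneuler1} was established). One minor point worth stating explicitly is that the grouping in the inductive step is legitimate precisely because Theorem \ref{teogeneuler1} holds for the \emph{arbitrary} combined order $\alpha_{1}+\cdots+\alpha_{k}$, not merely for integer orders; since that theorem was derived directly from the addition formula \eqref{euler6}, which is valid for real or complex order, no restriction on the $\alpha_{j}$ is incurred.
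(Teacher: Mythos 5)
Your proof is correct and follows exactly the route the paper intends: the paper's own proof consists of the single line ``the application of induction on $k$ gives the desired result,'' and your argument simply spells out that induction, anchoring the base case in Theorem \ref{teogeneuler1} and closing the step via associativity. No discrepancy to report.
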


\begin{proof}
The application of induction on $k$ gives the desired result.
\end{proof}

If we take $x=x_{1}=x_{2}=\cdots=x_{k}$ and $\alpha=\alpha_{1}=\alpha_{2}=\cdots=\alpha_{k}$, then we obtain  the following simple formula for the powers of the generalized Euler matrix, and consequently, for  the powers of the Euler polynomial and  Euler matrices.

\begin{coro}
\label{corgeneuler2}
The  generalized Euler matrix $\E^{(\alpha)}(x)$ satisfies the following identity.
\begin{equation}
\label{geneuler3}
\left(\E^{(\alpha)}(x)\right)^{k}= \E^{(k\alpha)}(kx).
\end{equation}
In particular,
\begin{equation}
\label{geneuler4}
\begin{aligned}
\left(\E(x)\right)^{k}=& \E^{(k)}(kx),\\
\E^{k}=& \E^{(k)}.
\end{aligned}
\end{equation}
\end{coro}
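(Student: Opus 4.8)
The plan is to obtain this statement as a direct specialization of Corollary \ref{corgeneuler1}, so almost all of the work has already been done. The key observation is that the product formula \eqref{geneuler2} holds for \emph{arbitrary} tuples $(x_{1},\ldots,x_{k})\in\RR^{k}$ and arbitrary (real or complex) parameters $\alpha_{1},\ldots,\alpha_{k}$; hence I am free to collapse all the arguments onto a single common value.

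First I would set $x_{1}=x_{2}=\cdots=x_{k}=x$ and $\alpha_{1}=\alpha_{2}=\cdots=\alpha_{k}=\alpha$ in \eqref{geneuler2}. On the right-hand side the product $\E^{(\alpha_{1})}(x_{1})\,\E^{(\alpha_{2})}(x_{2})\cdots\E^{(\alpha_{k})}(x_{k})$ becomes the $k$-fold product of the single matrix $\E^{(\alpha)}(x)$ with itself, that is $\bigl(\E^{(\alpha)}(x)\bigr)^{k}$. On the left-hand side the sum of the orders telescopes to $\alpha_{1}+\cdots+\alpha_{k}=k\alpha$ and the sum of the arguments to $x_{1}+\cdots+x_{k}=kx$, so the left-hand side becomes $\E^{(k\alpha)}(kx)$. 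Equating the two sides yields \eqref{geneuler3}. (One should note that the order of the factors is immaterial here, since Theorem \ref{teogeneuler1} already guarantees that the matrices $\E^{(\alpha)}(x)$ commute for a fixed argument, so the power $\bigl(\E^{(\alpha)}(x)\bigr)^{k}$ is unambiguous.)

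For the particular cases in \eqref{geneuler4}, I would simply further specialize \eqref{geneuler3}. Taking $\alpha=1$ and recalling the definition $\E(x)=\E^{(1)}(x)$ from Definition \ref{def3} gives $\bigl(\E(x)\bigr)^{k}=\E^{(k)}(kx)$, the first identity. Then putting $x=0$ in this last relation and using $\E=\E(0)$ together with $\E^{(k)}(0)=\E^{(k)}$ produces $\E^{k}=\E^{(k)}(0)=\E^{(k)}$, the second identity.

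I do not anticipate any genuine obstacle: the statement is an immediate corollary and the only points meriting a word of care are the commutativity that makes the $k$-th power well defined (supplied by Theorem \ref{teogeneuler1}) and the bookkeeping of which specialization of $\alpha$ and $x$ produces each line of \eqref{geneuler4}. A one- or two-sentence proof, or even the remark ``induction on $k$ via Theorem \ref{teogeneuler1}'' in the spirit of the proof of Corollary \ref{corgeneuler1}, suffices.
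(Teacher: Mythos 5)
Your proposal is correct and follows exactly the paper's route: the paper obtains \eqref{geneuler3} precisely by taking $x=x_{1}=\cdots=x_{k}$ and $\alpha=\alpha_{1}=\cdots=\alpha_{k}$ in Corollary \ref{corgeneuler1}, and the particular cases \eqref{geneuler4} by the same specializations $\alpha=1$ and $x=0$ that you use. Your extra remark on commutativity (from Theorem \ref{teogeneuler1}) making the $k$-th power unambiguous is a harmless and sensible addition, but nothing in your argument departs from the paper's.
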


\begin{remark}
Note that Theorem \ref{teogeneuler1} and  Corollaries \ref{corgeneuler1} and \ref{corgeneuler2} are  respectively, the analogous of Theorem 2.1 and Corollaries 2.2 and 2.3 of \cite{ZW2006} in the setting of Euler matrices.
\end{remark}

Let $\D$ be the $(n+1)\times(n+1)$ matrix whose entries are defined by
\begin{equation}
\label{euler13}
d_{i,j}=\left\{\begin{array}{l}
(1+(-1)^{i-j})\binom{i}{j} 2^{j-i-1}, \quad i\geq j,\\
\\
0, \quad \mbox{otherwise}.
\end{array}\right.
\end{equation}

\begin{teo}
\label{teogeneuler2}
The inverse matrix of the specialized Euler matrix $\EE$ is given by
$$\EE^{-1}=\D.$$
Furthermore,
$$\left[\E^{(k)}\left(\frac{k}{2}\right)\right]^{-1}= \D^{k}.$$
\end{teo}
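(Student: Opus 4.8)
The plan is to verify directly that $\EE\,\D = I_{n+1}$, and the cleanest route is to first re-express $\D$ in terms of the generalized Pascal matrix. Observe that the factor $(1+(-1)^{i-j})$ appearing in \eqref{euler13} equals $2$ when $i-j$ is even and $0$ when $i-j$ is odd, so for $i\geq j$ one has
$$d_{i,j} = \tfrac{1}{2}\binom{i}{j}\left[\left(\tfrac{1}{2}\right)^{i-j} + \left(-\tfrac{1}{2}\right)^{i-j}\right],$$
which is exactly the $(i,j)$-entry of $\tfrac{1}{2}\left(P\left[\tfrac{1}{2}\right] + P\left[-\tfrac{1}{2}\right]\right)$. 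Hence the first step I would carry out is to prove, by comparing entries via \eqref{pascal1} and \eqref{euler13}, the factorization $\D = \tfrac{1}{2}\left(P\left[\tfrac{1}{2}\right] + P\left[-\tfrac{1}{2}\right]\right)$.

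Next, since $\E^{(0)}(x) = P[x]$, I would apply the product formula \eqref{geneuler1} of Theorem \ref{teogeneuler1} with the choices $\alpha = 1$, $\beta = 0$ to obtain
$$\EE\,P\left[\tfrac{1}{2}\right] = \E^{(1)}\left(\tfrac{1}{2}\right)\E^{(0)}\left(\tfrac{1}{2}\right) = \E(1), \qquad \EE\,P\left[-\tfrac{1}{2}\right] = \E^{(1)}\left(\tfrac{1}{2}\right)\E^{(0)}\left(-\tfrac{1}{2}\right) = \E(0) = \E.$$
Combining these two identities with the factorization of $\D$ from the first step gives $\EE\,\D = \tfrac{1}{2}\left(\E(1) + \E\right)$. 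Finally, the relation $\E(1) + \E = 2I_{n+1}$, already derived in the text by putting $x=0$ in \eqref{euler20}, yields $\EE\,\D = I_{n+1}$, and therefore $\EE^{-1} = \D$.

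For the \emph{furthermore} part, I would invoke Corollary \ref{corgeneuler2}: taking $\alpha = 1$ and $x = \tfrac{1}{2}$ in \eqref{geneuler3} gives $\EE^{k} = \E^{(k)}\left(\frac{k}{2}\right)$. Consequently, using the first part,
$$\left[\E^{(k)}\left(\frac{k}{2}\right)\right]^{-1} = \left(\EE^{k}\right)^{-1} = \left(\EE^{-1}\right)^{k} = \D^{k},$$
which is the claimed identity.

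I do not expect a genuine obstacle here: the argument is essentially a one-line consequence of the product formula once $\D$ is recognized as the symmetrized Pascal matrix. The only point demanding care is the bookkeeping in the first step, namely confirming that $\tfrac{1}{2}\binom{i}{j}\left[\left(\tfrac{1}{2}\right)^{i-j}+\left(-\tfrac{1}{2}\right)^{i-j}\right]$ reproduces $d_{i,j}$ for every $i\geq j$, and in particular that the entries with $i-j$ odd vanish on both sides. Everything after that follows mechanically from Theorem \ref{teogeneuler1} and the already-established identity $\E(1)+\E = 2I_{n+1}$.
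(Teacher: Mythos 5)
Your proposal is correct, and it follows a genuinely different route from the paper's proof. The paper proceeds entrywise: it writes out the $(i,j)$-th entry of $\D\EE$, reindexes it into the form $\binom{i}{j}2^{j-i}\sum_{k=0}^{i-j}\binom{i-j}{k}\frac{1+(-1)^{i-j-k}}{2}\,2^{k}E_{k}\left(\frac{1}{2}\right)$, and identifies the inner sum, through $\varepsilon_{m}=2^{m}E_{m}\left(\frac{1}{2}\right)$, with the inverse-relation identity $\sum_{k=0}^{m}\frac{1+(-1)^{k}}{2}\binom{m}{k}\varepsilon_{m-k}=\delta_{m,0}$, quoted from Riordan's theory of inverse relations for exponential generating functions; the product $\EE\D$ is handled symmetrically. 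You instead exhibit the structural factorization $\D=\frac{1}{2}\left(P\left[\frac{1}{2}\right]+P\left[-\frac{1}{2}\right]\right)$ --- your entry check is right, since $\frac{1}{2}\binom{i}{j}\left[\left(\frac{1}{2}\right)^{i-j}+\left(-\frac{1}{2}\right)^{i-j}\right]=(1+(-1)^{i-j})\binom{i}{j}2^{j-i-1}=d_{i,j}$ --- and then let Theorem \ref{teogeneuler1} with $\beta=0$ and the already-derived identity $\E(1)+\E=2I_{n+1}$ do all the work. The two arguments are not unrelated: by the addition formula, the paper's Riordan-type sum equals $2^{n-1}\left(E_{n}(1)+E_{n}(0)\right)$, which is exactly the scalar content of $\E(1)+\E=2I_{n+1}$; so you have in effect replaced the appeal to an external combinatorial identity by matrix algebra internal to the paper, which is arguably cleaner and more self-contained, while the paper's computation has the merit of making the underlying inverse relation for the numbers $\varepsilon_{n}$ explicit. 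One point you should state rather than leave implicit: you verify only $\EE\D=I_{n+1}$, whereas the paper checks both orders; this is harmless here because for square matrices a one-sided inverse is two-sided (or simply note that both factors are unit lower triangular, hence invertible), but the justification deserves a sentence. Your \emph{furthermore} step is identical to the paper's, via Corollary \ref{corgeneuler2}.
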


\begin{proof}
Taking into account \eqref{euler4} and  \eqref{euler12}, it is possible to deduce
$$ \sum_{k=0}^{n}\frac{(1+(-1)^{k})}{2}\binom{n}{k} 2^{n-k}E_{n-k}\left(\frac{1}{2}\right)=\sum_{k=0}^{n}\frac{(1+(-1)^{k})}{2}\binom{n}{k}\varepsilon_{n-k}= \delta_{n,0},$$
where $\delta_{n,0}$ is the Kronecker delta (cf., e.g., \cite[pp. 107-109]{Rio68}). So, we obtain that the $(i,j)$-th entry of the matrix product $\D\EE$ may be written as
$$
\sum_{k=j}^{i} \binom{i}{k}\frac{(1+(-1)^{i-k})}{2}2^{k-i}\binom{k}{j}E_{k-j}\left(\frac{1}{2}\right) $$
$$\begin{aligned}
=&\binom{i}{j}2^{j-i}\sum_{k=j}^{i}\binom{i-j}{k-j}\frac{(1+(-1)^{i-k})}{2}2^{k-j}E_{k-j}\left(\frac{1}{2}\right)\\
=&\binom{i}{j}2^{j-i}\sum_{k=0}^{i-j}\binom{i-j}{k} \frac{(1+(-1)^{i-j-k})}{2}2^{k}E_{k}\left(\frac{1}{2}\right)
=&\binom{i}{j}2^{j-i}\delta_{i-j,0},
\end{aligned}$$
and consequently, $\D\EE=I_{n+1}$. Similar arguments allow to show that  $\EE\D=I_{n+1}$, and hence $\EE^{-1}=\D$.

Finally, from the identity $\EE^{-1}=\D$ and   \eqref{geneuler4} we see that
$$\left[\E^{(k)}\left(\frac{k}{2}\right)\right]^{-1}= \left(\EE^{k}\right)^{-1}=\left(\EE^{-1}\right)^{k}=\D^{k}.$$
This last chain of equalities finishes the proof.
\end{proof}

It is worthwhile to mention that the calculation of $\EE^{-1}$ strongly depends  on  the use of inverse relations derived from exponential generating functions (cf. \cite[Chap. 3, Sec. 3.4]{Rio68}). This tool can be applied in order to determine $\EE^{-1}$, but it not works for determining of $\E^{-1}$. This fact  and \eqref{euler5} suggest that methodology proposed in \cite{ZW2006} does not suffice to finding an explicit formula for $\E^{-1}$.

The next result establishes the relation between the generalized Euler  matrix and the generalized Pascal matrix of first kind.

\begin{teo}
\label{teogeneuler3}
The generalized Euler matrix $\E^{(\alpha)}(x)$ satisfies the following relation.
\begin{equation}
\label{euler141}
\E^{(\alpha)}(x+y)= \E^{(\alpha)}(x)P[y]= P[x]\E^{(\alpha)}(y)= \E^{(\alpha)}(y)P[x].
\end{equation}
In particular,
\begin{equation}
\label{euler14}
\E(x+y)=P[x]\E(y)=P[y]\E(x),
\end{equation}
\begin{equation}
\label{euler17}
\E(x)=P[x]\E,
\end{equation}
\begin{equation}
\label{euler15}
\E\left(x+\frac{1}{2}\right)=P[x]\EE,
\end{equation}
and
\begin{equation}
\label{euler19}
\E=P\left[-\frac{1}{2}\right]\EE.
\end{equation}
\end{teo}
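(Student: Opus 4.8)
The plan is to derive everything from the product formula of Theorem~\ref{teogeneuler1} by specializing one of the orders to zero. The key observation, already recorded in the text, is that $E_{n}^{(0)}(x)=x^{n}$, so the entries of $\E^{(0)}(x)$ are exactly $\binom{i}{j}x^{i-j}$; that is, $\E^{(0)}(x)=P[x]$. With this identification in hand, the relation \eqref{euler141} is nothing more than \eqref{geneuler1} evaluated at $\beta=0$.

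Concretely, first I would set $\beta=0$ in the chain of equalities \eqref{geneuler1}. Since $\E^{(\alpha+0)}=\E^{(\alpha)}$ together with $\E^{(0)}(x)=P[x]$ and $\E^{(0)}(y)=P[y]$, the three products $\E^{(\alpha)}(x)\,\E^{(0)}(y)$, $\E^{(0)}(x)\,\E^{(\alpha)}(y)$ and $\E^{(\alpha)}(y)\,\E^{(0)}(x)$ become $\E^{(\alpha)}(x)P[y]$, $P[x]\E^{(\alpha)}(y)$ and $\E^{(\alpha)}(y)P[x]$, and each of them equals $\E^{(\alpha)}(x+y)$. This yields \eqref{euler141} at once.

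The remaining identities then follow by routine specialization of \eqref{euler141}. Putting $\alpha=1$ gives $\E(x+y)=\E(x)P[y]=P[x]\E(y)=\E(y)P[x]$; reading off the inner two expressions, and using the symmetry of $\E(x+y)$ under interchange of $x$ and $y$ to identify $P[x]\E(y)$ with $P[y]\E(x)$, produces \eqref{euler14}. Setting $y=0$ in \eqref{euler14} and recalling $P[0]=I_{n+1}$ from \eqref{pascal2} gives \eqref{euler17}; setting $y=\frac{1}{2}$ gives \eqref{euler15} by the definition $\EE=\E\left(\frac{1}{2}\right)$; and finally substituting $x=-\frac{1}{2}$ into \eqref{euler15}, so that the left-hand side collapses to $\E(0)=\E$, yields \eqref{euler19}.

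There is essentially no analytic obstacle here: once $\E^{(0)}(x)=P[x]$ is in place, the theorem is a pure bookkeeping consequence of Theorem~\ref{teogeneuler1}. The only point that requires a little care is keeping track of the order of the factors in the noncommutative matrix products when reading off the symmetric forms, and confirming that the two expressions $P[x]\E(y)$ and $P[y]\E(x)$ in \eqref{euler14} genuinely coincide---but this is guaranteed by the symmetry of $\E^{(\alpha)}(x+y)$ under interchanging $x$ and $y$, which is already encoded in \eqref{geneuler1}.
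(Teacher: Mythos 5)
Your proposal is correct and follows essentially the same route as the paper: substituting $\beta=0$ into the product formula \eqref{geneuler1}, using the identification $\E^{(0)}(x)=P[x]$ to obtain \eqref{euler141}, and then specializing $\alpha=1$, $y=0$, $y=\frac{1}{2}$, and $x=-\frac{1}{2}$ in turn. Your explicit remark that $P[x]\E(y)=P[y]\E(x)$ follows from the $x\leftrightarrow y$ symmetry of $\E(x+y)$ is a small point the paper leaves implicit, but it is the same argument.
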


\begin{proof}
The substitution $\beta=0$ into \eqref{geneuler1} yields
$$\E^{(\alpha)}(x+y)= \E^{(\alpha)}(x)\,\E^{(0)}(y)= \E^{(0)}(x)\,\E^{(\alpha)}(y)= \E^{(\alpha)}(y)\, \E^{(0)}(x).$$
Since $\E^{(0)}(x)=P[x]$, we obtain
$$\E^{(\alpha)}(x+y)=P[x]\E^{(\alpha)}(y).$$
A similar argument allows to show that $\E^{(\alpha)}(x+y)=\E^{(\alpha)}(x)P[y]$ and \\$\E^{(\alpha)}(x+y)= \E^{(\alpha)}(y)P[x]$.

Next, the  substitution $\alpha=1$ into \eqref{euler141} yields \eqref{euler14}. From the substitutions $y=0$ and $y=\frac{1}{2}$ into  \eqref{euler14}, we obtain  the relations \eqref{euler17} and \eqref{euler15}, respectively.

Finally, the substitution $x=-\frac{1}{2}$ into \eqref{euler15} completes the proof.
\end{proof}

\begin{remark}
Note that the relation \eqref{euler14} is the analogous of \cite[Eq. (13)]{ZW2006} in the context of Euler polynomial matrices and, the counterpart of  \eqref{euler17}  is   \cite[Eq. (14)]{ZW2006}. However, the relation \eqref{euler15} is slightly different from \cite[Eq. (14)]{ZW2006}, since it involves an Euler polynomial matrix with ``shifted argument'' and the specialized Euler matrix. More precisely, the relation \cite[Eq. (14)]{ZW2006} reads as
$$\B(x)=P[x]\B,$$
consequently, this relation expresses to the Bernoulli polynomial matrix $\B(x)$ in terms of the matrix product between  the generalized Pascal matrix of first kind $P[x]$ and the Bernoulli matrix $\B$. While, on the left hand side of \eqref{euler15} appears an Euler polynomial matrix with ``shifted argument'', and the matrix product on the right hand side  of  \eqref{euler15} contains to the specialized Euler matrix $\EE$.
\end{remark}

The following example shows the validity of Theorem \ref{teogeneuler3}.

\begin{eje}
Let us consider  $n=3$. It follows from the definition \ref{def3} that
$$\EE=\begin{bmatrix}
1 & 0 & 0 & 0\\
0 & 1 & 0 & 0\\
-\frac{1}{4} & 0 & 1 & 0\\
0 & -\frac{3}{4} & 0 & 1 \end{bmatrix} \quad \mbox{ and }\quad \E\left(x+\frac{1}{2}\right)=
\begin{bmatrix}
1 & 0 & 0 & 0\\
x & 1 & 0 & 0\\
x^2-\frac{1}{4} & 2x & 1 & 0\\
x^3-\frac{3}{4}x & 3x^2-\frac{3}{4} & 3x & 1
\end{bmatrix}.$$

On the other hand, from \eqref{euler15} and a simple computation we have
\begin{eqnarray*}
\E\left(x+\frac{1}{2}\right)&=&
\underbrace{\begin{bmatrix}
1 & 0 & 0 & 0\\
x & 1 & 0 & 0\\
x^2 & 2x & 1 & 0\\
x^3 & 3x^2 & 3x & 1 \end{bmatrix}}_{P[x]}\underbrace{\begin{bmatrix}
1 & 0 & 0 & 0\\
0 & 1 & 0 & 0\\
-\frac{1}{4} & 0 & 1 & 0\\
0 & -\frac{3}{4} & 0 & 1 \end{bmatrix}}_{\EE} =\begin{bmatrix}
1 & 0 & 0 & 0\\
x & 1 & 0 & 0\\
x^2-\frac{1}{4} & 2x & 1 & 0\\
x^3-\frac{3}{4}x & 3x^2-\frac{3}{4} & 3x & 1
\end{bmatrix}.
\end{eqnarray*}
\end{eje}

The next theorem follows by a simple computation.

\begin{teo}
\label{teogeneuler4}
The inverse of the Euler polynomial matrix $\E\left(x+\frac{1}{2}\right)$ can be expressed as
\begin{equation}
\label{euler16}
\left[\E\left(x+\frac{1}{2}\right)\right]^{-1}= \EE^{-1}P[-x]=\D P[-x].
\end{equation}
In particular,
\begin{equation}
\label{euler21}
\E^{-1}= \D P\left[\frac{1}{2}\right].
\end{equation}
\end{teo}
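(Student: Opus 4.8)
The plan is to invert the factorization established in Theorem \ref{teogeneuler3}. Equation \eqref{euler15} gives the clean product decomposition
$$\E\left(x+\frac{1}{2}\right)=P[x]\,\EE,$$
which writes the shifted Euler polynomial matrix as a product of two matrices that are already known to be invertible: the generalized Pascal matrix $P[x]$ is invertible by part (b) of the Proposition in Section \ref{sec:1}, with $P^{-1}[x]=P[-x]$ (see \eqref{pascal3}), and the specialized Euler matrix $\EE$ is invertible with $\EE^{-1}=\D$ by Theorem \ref{teogeneuler2}.

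First I would apply the reversal rule for the inverse of a product, $(AB)^{-1}=B^{-1}A^{-1}$, to the factorization above. This yields
$$\left[\E\left(x+\frac{1}{2}\right)\right]^{-1}=\left(P[x]\,\EE\right)^{-1}=\EE^{-1}\,P^{-1}[x].$$
Then I would substitute the two known inverses, namely $P^{-1}[x]=P[-x]$ from \eqref{pascal3} and $\EE^{-1}=\D$ from Theorem \ref{teogeneuler2}, obtaining at once
$$\left[\E\left(x+\frac{1}{2}\right)\right]^{-1}=\EE^{-1}P[-x]=\D\,P[-x],$$
which is exactly \eqref{euler16}.

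For the particular case \eqref{euler21}, I would specialize the identity just proved at $x=-\frac{1}{2}$. Since $\E\left(-\frac{1}{2}+\frac{1}{2}\right)=\E(0)=\E$ is the Euler matrix, the left-hand side collapses to $\E^{-1}$, while the right-hand side becomes $\D\,P\left[\frac{1}{2}\right]$ because $P\left[-\left(-\frac{1}{2}\right)\right]=P\left[\frac{1}{2}\right]$. This produces $\E^{-1}=\D\,P\left[\frac{1}{2}\right]$, as claimed.

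There is no genuine obstacle here; the only point that requires care is confirming that both factors in \eqref{euler15} are invertible, so that the product rule for inverses legitimately applies. This is precisely what \eqref{pascal3} and Theorem \ref{teogeneuler2} guarantee, and the whole argument is purely formal once those two inverses are in hand.
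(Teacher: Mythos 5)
Your proof is correct and follows exactly the paper's own argument: the paper deduces \eqref{euler16} from \eqref{pascal3}, \eqref{euler15} and Theorem \ref{teogeneuler2}, precisely the three ingredients you combine via $(AB)^{-1}=B^{-1}A^{-1}$, and it obtains \eqref{euler21} by the same substitution $x=-\frac{1}{2}$. You have simply written out in full detail the steps the paper leaves implicit.
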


\begin{proof}
Using \eqref{pascal3}, \eqref{euler15} and Theorem \ref{teogeneuler2} the relation \eqref{euler16} is deduced. The substitution $x=-\frac{1}{2}$ into \eqref{euler16} yields \eqref{euler21}.
\end{proof}

\begin{eje}
Let us consider  $n=3$. From the definition \ref{def3} and a standard computation we obtain
\begin{eqnarray*}
\left[\E\left(x+\frac{1}{2}\right)\right]^{-1}&=&\begin{bmatrix}
1 & 0 & 0 & 0\\
x & 1 & 0 & 0\\
x^2-\frac{1}{4} & 2x & 1 & 0\\
x^3-\frac{3}{4}x & 3x^2-\frac{3}{4} & 3x & 1
\end{bmatrix}^{-1}=\begin{bmatrix}
1& 0& 0& 0\\
-x& 1& 0& 0\\
x^{2}+\frac{1}{4}& -2x& 1& 0\\
-x^{3}-\frac{3}{4}x& 3x^{2}+\frac{3}{4}& -3x& 1
\end{bmatrix}.
\end{eqnarray*}

On the other hand, from \eqref{euler16}  we have
\begin{eqnarray*}
\left[\E\left(x+\frac{1}{2}\right)\right]^{-1}&=&
\underbrace{\begin{bmatrix}
1 & 0 & 0 & 0\\
0 & 1 & 0 & 0\\
\frac{1}{4} &0 & 1 & 0\\
0 & \frac{3}{4} & 0 & 1 \end{bmatrix}}_{\D}\underbrace{\begin{bmatrix}
1 & 0 & 0 & 0\\
-x & 1 & 0 & 0\\
x^2 & -2x & 1 & 0\\
-x^3 & 3x^2 & -3x & 1 \end{bmatrix}}_{P[-x]}= \begin{bmatrix}
1& 0& 0& 0\\
-x& 1& 0& 0\\
x^{2}+\frac{1}{4}& -2x& 1& 0\\
-x^{3}-\frac{3}{4}x& 3x^{2}+\frac{3}{4}& -3x& 1
\end{bmatrix}.
\end{eqnarray*}

Hence, when  $x=-\frac{1}{2}$, we get
$$\E^{-1}= \underbrace{\begin{bmatrix}
1 & 0 & 0 & 0\\
0 & 1 & 0 & 0\\
\frac{1}{4} &0 & 1 & 0\\
0 & \frac{3}{4} & 0 & 1 \end{bmatrix}}_{\D}\underbrace{\begin{bmatrix}
1 & 0 & 0 & 0\\
\frac{1}{2} & 1 & 0 & 0\\
\frac{1}{4} & 1 & 1 & 0\\
\frac{1}{8} & \frac{3}{4}& \frac{3}{2} & 1 \end{bmatrix}}_{P\left[\frac{1}{2}\right]}= \begin{bmatrix}
1& 0& 0& 0\\
\frac{1}{2}& 1& 0& 0\\
\frac{1}{2}& 1& 1& 0\\
\frac{1}{2}& \frac{3}{2}&\frac{3}{2}& 1
\end{bmatrix}.$$
\end{eje}

At this point, an apart mention deserves the  recent  work \cite{IRS} since it states an explicit formula to the inverse matrix of the $q$-Pascal matrix plus one in terms of the $q$-analogue of the Euler matrix $\E$.

As a consequence of the relations \eqref{pascal6}, \eqref{lucas2}, and Theorems \ref{teogeneuler3} and \ref{teogeneuler4}, we obtain the following corollaries.

\begin{coro}
\label{corgeneuler3}
The Euler polynomial matrix $\E\left(x+\frac{1}{2}\right)$ and its inverse can be factorized by summation matrices as follows.
$$\begin{aligned}
\E\left(x+\frac{1}{2}\right)=& G_{n}[x]G_{n-1}[x]\cdots G_{1}[x] \EE,\\
\\
\left[\E\left(x+\frac{1}{2}\right)\right]^{-1}=& \D G_{n}[-x]G_{n-1}[-x]\cdots G_{1}[-x].
\end{aligned}$$
In particular,
$$\begin{aligned}
\E=& G_{n}\left[-\frac{1}{2}\right]G_{n-1}\left[-\frac{1}{2}\right]\cdots G_{1}\left[-\frac{1}{2}\right] \EE,\\
\\
\E^{-1}=&\D G_{n}\left[\frac{1}{2}\right]G_{n-1}\left[\frac{1}{2}\right]\cdots G_{1}\left[\frac{1}{2}\right].
\end{aligned}$$
\end{coro}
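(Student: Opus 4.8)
The plan is to combine the two matrix identities already established in this section---namely the product representation \eqref{euler15} for $\E\left(x+\frac{1}{2}\right)$ and the inverse formula \eqref{euler16}---with the summation-matrix factorization \eqref{pascal6} of the generalized Pascal matrix $P[x]$. Each of the four asserted factorizations is obtained by a single substitution into an identity that is already proved, so the argument is essentially bookkeeping.

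First I would recall \eqref{euler15}, which states $\E\left(x+\frac{1}{2}\right)=P[x]\,\EE$, and insert the factorization $P[x]=G_{n}[x]G_{n-1}[x]\cdots G_{1}[x]$ from \eqref{pascal6}. By associativity of matrix multiplication, placing this product immediately to the left of $\EE$ and keeping the factors in the same order yields $\E\left(x+\frac{1}{2}\right)=G_{n}[x]G_{n-1}[x]\cdots G_{1}[x]\,\EE$, which is the first claimed factorization.

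Next, for the inverse I would start from \eqref{euler16}, giving $\left[\E\left(x+\frac{1}{2}\right)\right]^{-1}=\D\,P[-x]$, and apply \eqref{pascal6} with argument $-x$ in place of $x$ to write $P[-x]=G_{n}[-x]G_{n-1}[-x]\cdots G_{1}[-x]$. Substituting this expression to the right of $\D$ produces the second factorization. The two particular cases then follow from the single specialization $x=-\frac{1}{2}$: on the left-hand sides one has $\E\left(-\frac{1}{2}+\frac{1}{2}\right)=\E(0)=\E$ and correspondingly $\E^{-1}$, while on the right-hand sides the arguments of the summation matrices become $-\frac{1}{2}$ and $\frac{1}{2}$, respectively.

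Since every step is a direct substitution into an already-proven relation, I do not expect any genuine obstacle. The only point demanding care is the ordering of factors: because the matrices $G_{k}[x]$ do not commute with one another nor with $\EE$ and $\D$, the factorization of $P[x]$ must be spliced in on precisely the side and in precisely the order dictated by \eqref{euler15} and \eqref{euler16}, so that the Pascal factorization appears to the left of $\EE$ in the first identity and to the right of $\D$ in the second.
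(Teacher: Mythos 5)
Your proposal is correct and follows exactly the route the paper intends: the corollary is stated there as an immediate consequence of the factorization \eqref{pascal6} of $P[x]$ combined with \eqref{euler15} and \eqref{euler16}, and the particular cases follow from the substitution $x=-\frac{1}{2}$, just as you do. Your remark about the ordering of the non-commuting factors $G_{k}[\cdot]$ is a sensible precaution but introduces nothing beyond the paper's own (implicit) argument.
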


\begin{coro}
\label{corgeneuler4}
For $x$  any nonzero real number, the Euler polynomial matrix $\E\left(x+\frac{1}{2}\right)$ and its inverse can be factorized, respectively, in terms of the Lucas matrix $\Lu$ and its inverse as follows.
$$\begin{aligned}
\E\left(x+\frac{1}{2}\right)=& \Lu \G[x]\EE= \HH[x]\Lu \EE ,\\
\\
\left[\E\left(x+\frac{1}{2}\right)\right]^{-1}=& \D (\G[x])^{-1} \Lu^{-1}= \D\Lu^{-1} (\HH[x])^{-1} .
\end{aligned}$$
In particular,
$$\begin{aligned}
\E=& \Lu \G\left[-\frac{1}{2}\right]\EE=\HH\left[-\frac{1}{2}\right]\Lu \EE,\\
\\
\E^{-1}=&\D \left(\G\left[-\frac{1}{2}\right]\right)^{-1} \Lu^{-1}= \D \Lu^{-1} \left(\HH\left[-\frac{1}{2}\right]\right)^{-1}.
\end{aligned}$$
\end{coro}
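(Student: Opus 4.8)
The plan is to derive all four identities by direct substitution, combining the factorization of the generalized Pascal matrix in terms of the Lucas matrix (equation \eqref{lucas2}) with the two bridges to the Euler world already established: the relation $\E\left(x+\frac{1}{2}\right)=P[x]\EE$ from \eqref{euler15}, and the inverse formula $\left[\E\left(x+\frac{1}{2}\right)\right]^{-1}=\D P[-x]$ from Theorem \ref{teogeneuler4}. No new combinatorial identities are needed; the whole statement is a transport of \eqref{lucas2} through these two relations.

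First I would handle the two forward factorizations. Starting from \eqref{euler15} and inserting $P[x]=\Lu\,\G[x]=\HH[x]\,\Lu$ from \eqref{lucas2} immediately gives
$$\E\left(x+\frac{1}{2}\right)=P[x]\EE=\Lu\,\G[x]\,\EE=\HH[x]\,\Lu\,\EE,$$
which is the first asserted chain of equalities. For the inverse, I would begin from Theorem \ref{teogeneuler4} and rewrite $P[-x]$ using \eqref{lucas2}. The point to watch is that the statement demands $(\G[x])^{-1}$ and $(\HH[x])^{-1}$, not $\G[-x]$ and $\HH[-x]$; these arise by inverting \eqref{lucas2} rather than by evaluating it at $-x$. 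Since $\Lu$ is lower triangular with $l_{i,i}=L_{1}=1$ on the diagonal it is invertible, and $P[x]$ is invertible by part (b) of the Proposition, so the factors $\G[x]=\Lu^{-1}P[x]$ and $\HH[x]=P[x]\Lu^{-1}$ are invertible as well. Inverting $P[x]=\Lu\,\G[x]$ and using $P[x]^{-1}=P[-x]$ from \eqref{pascal3} yields $P[-x]=(\G[x])^{-1}\Lu^{-1}$, and inverting $P[x]=\HH[x]\,\Lu$ yields $P[-x]=\Lu^{-1}(\HH[x])^{-1}$. Substituting these into $\left[\E\left(x+\frac{1}{2}\right)\right]^{-1}=\D P[-x]$ produces
$$\left[\E\left(x+\frac{1}{2}\right)\right]^{-1}=\D\,(\G[x])^{-1}\,\Lu^{-1}=\D\,\Lu^{-1}\,(\HH[x])^{-1},$$
as claimed.

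The particular cases follow by setting $x=-\frac{1}{2}$, noting that $\E\left(-\frac{1}{2}+\frac{1}{2}\right)=\E(0)=\E$. The forward identity then reads $\E=\Lu\,\G\!\left[-\frac{1}{2}\right]\EE=\HH\!\left[-\frac{1}{2}\right]\Lu\,\EE$ (which is also consistent with \eqref{euler19} after inserting \eqref{lucas2}), and the inverse identity becomes $\E^{-1}=\D\left(\G\!\left[-\frac{1}{2}\right]\right)^{-1}\Lu^{-1}=\D\,\Lu^{-1}\left(\HH\!\left[-\frac{1}{2}\right]\right)^{-1}$, which matches \eqref{euler21} after the same substitution.

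The only genuine subtlety, and thus the step I would be most careful with, is the order reversal in the inverse factorizations: one must invert the products in \eqref{lucas2} rather than naively replace $x$ by $-x$, and must first record that $\G[x]$ and $\HH[x]$ are invertible so that $(\G[x])^{-1}$ and $(\HH[x])^{-1}$ are meaningful. Everything else is a mechanical substitution of previously proved identities, which is why the theorem can fairly be said to follow ``by a simple computation'' once these structural relations are in place.
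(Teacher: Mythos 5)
Your proposal is correct and follows exactly the route the paper intends: the paper derives this corollary as an immediate consequence of \eqref{lucas2} together with Theorems \ref{teogeneuler3} and \ref{teogeneuler4}, i.e.\ substituting $P[x]=\Lu\,\G[x]=\HH[x]\,\Lu$ into \eqref{euler15} and inverting these products (rather than evaluating at $-x$) before substituting into $\left[\E\left(x+\frac{1}{2}\right)\right]^{-1}=\D P[-x]$, then setting $x=-\frac{1}{2}$. Your explicit attention to the invertibility of $\G[x]$ and $\HH[x]$ and to the order reversal in the inverse factorizations is a careful touch the paper leaves implicit, but it is the same argument.
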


We end this section showing others identities, which can be easily deduced from the content of this paper. So, we will omit the details of their proofs.
$$\begin{aligned}
 D_{x}\E(x+y)=&\LL P[x]\E(y),\\
 &\\
D_{x}\E(x)=&\LL P[x]\E,\\
 &\\
 D_{x}\E\left(x+\frac{1}{2}\right)=&\LL P[x]\EE,\\
 &\\
  D_{x}\left[\E\left(x+\frac{1}{2}\right)\right]^{-1}=& \D\LL P[-x].
\end{aligned}$$

\section{Generalized Euler polynomial matrices via Fibonacci and Lucas matrices}
\label{sec:3}

For $0\leq i,j\leq n$ and $\alpha$ a real or complex number, let $\M^{(\alpha)}(x)$ be the $(n+1)\times(n+1)$ matrix whose entries are given by (cf. \cite[Eq. (18)]{ZW2006}):

\begin{equation}
\label{euler28a}
\tilde{m}^{(\alpha)}_{i,j}(x)= \binom{i}{j}E^{(\alpha)}_{i-j}(x)-\binom{i-1}{j}E^{(\alpha)}_{i-j-1}(x)-\binom{i-2}{j}E^{(\alpha)}_{i-j-2}(x).
\end{equation}
We denote $\M(x)=\M^{(1)}(x)$ and $\M=\M(0)$.

Similarly, let $\N^{(\alpha)}(x)$ be the $(n+1)\times(n+1)$ matrix whose entries are given by (cf. \cite[Eq. (32)]{ZW2006}):
\begin{equation}
\label{euler28b}
\tilde{n}^{(\alpha)}_{i,j}(x)= \binom{i}{j}E^{(\alpha)}_{i-j}(x)-\binom{i}{j+1}E^{(\alpha)}_{i-j-1}(x)-\binom{i}{j+2}E^{(\alpha)}_{i-j-2}(x).
\end{equation}
We denote $\N(x)=\N^{(1)}(x)$ and $\N=\N(0)$.

From the definitions of $\M^{(\alpha)}(x)$ and $\N^{(\alpha)}(x)$, we see that
$$\begin{array}{l}
\tilde{m}^{(\alpha)}_{0,0}(x)= \tilde{m}^{(\alpha)}_{1,1}(x)= \tilde{n}^{(\alpha)}_{0,0}(x)= \tilde{n}^{(\alpha)}_{1,1}(x)= E^{(\alpha)}_{0}(x)=1,\\
\\
\tilde{m}^{(\alpha)}_{0,j}(x)=\tilde{n}^{(\alpha)}_{0,j}(x)=0, \quad j\geq 1,\\
\\
\tilde{m}^{(\alpha)}_{1,0}(x)= \tilde{n}^{(\alpha)}_{1,0}(x)= E^{(\alpha)}_{1}(x)- E^{(\alpha)}_{0}(x)=x-\frac{\alpha}{2}-1,\\
\\
\tilde{m}^{(\alpha)}_{1,j}(x)=\tilde{n}^{(\alpha)}_{1,j}(x)=0, \quad j\geq 2,\\
\\
\tilde{m}^{(\alpha)}_{i,0}(x)= \tilde{n}^{(\alpha)}_{i,0}(x) = E^{(\alpha)}_{i}(x)-E^{(\alpha)}_{i-1}(x)-E^{(\alpha)}_{i-2}(x), \quad i\geq 2.
\end{array}$$

For $0\leq i,j\leq n$ and $\alpha$ a real or complex number, let $\Lu_{1}^{(\alpha)}(x)$ be the $(n+1)\times(n+1)$ matrix whose entries are given by
\begin{equation}
\label{euler28c}
\hat{l}^{(\alpha,1)}_{i,j}(x)= \binom{i}{j}E^{(\alpha)}_{i-j}(x)-3\binom{i-j}{j}E^{(\alpha)}_{i-j-1}(x)+
5\sum_{k=j}^{i-2}(-1)^{i-k}2^{i-k-2}\binom{k}{j}E^{(\alpha)}_{k-j}(x).
\end{equation}
We denote $\Lu_{1}(x)=\Lu_{1}^{(1)}(x)$ and $\Lu_{1}=\Lu_{1}(0)$.

Similarly, let $\Lu_{2}^{(\alpha)}(x)$ be the $(n+1)\times(n+1)$ matrix whose entries are given by
\begin{equation}
\label{euler28d}
\hat{l}^{(\alpha,2)}_{i,j}(x)= \binom{i}{j}E^{(\alpha)}_{i-j}(x)-3\binom{i}{j+1}E^{(\alpha)}_{i-j-1}(x)+
5\sum_{k=j+1}^{i}(-1)^{k-j}2^{k-j-2}\binom{i}{k}E^{(\alpha)}_{i-k}(x).
\end{equation}
We denote $\Lu_{2}(x)=\Lu_{2}^{(1)}(x)$ and $\Lu_{2}=\Lu_{2}(0)$.

From the definitions of  $\Lu_{1}^{(\alpha)}(x)$ and  $\Lu_{2}^{(\alpha)}(x)$, we see that
$$\begin{array}{l}
\hat{l}^{(\alpha,1)}_{i,i}(x)=  \hat{l}^{(\alpha,2)}_{i,i}(x)=1, \quad i\geq 0,\\
\\
\hat{l}^{(\alpha,1)}_{0,j}(x)= \hat{l}^{(\alpha,2)}_{0,j}(x)=0, \quad j\geq 1,\\
\\
\hat{l}^{(\alpha,1)}_{1,0}(x)= \hat{l}^{(\alpha,2)}_{1,0}(x)= E^{(\alpha)}_{1}(x)- 3E^{(\alpha)}_{0}(x)=x-\frac{\alpha}{2}-3,\\
\\
\hat{l}^{(\alpha,1)}_{1,j}(x)= \hat{l}^{(\alpha,2)}_{1,j}(x)=0, \quad j\geq 2,\\
\\
\hat{l}^{(\alpha,1)}_{i,0}(x)= E^{(\alpha)}_{i}(x)-3E^{(\alpha)}_{i-1}(x)+
5\sum_{k=0}^{i-2}(-1)^{i-k}2^{i-k-2}E^{(\alpha)}_{k}(x), \quad i\geq 2,\\
\\
\hat{l}^{(\alpha,2)}_{i,0}(x)= E^{(\alpha)}_{i}(x)-3iE^{(\alpha)}_{i-1}(x)+
5\sum_{k=1}^{i}(-1)^{k}2^{k-2}\binom{i}{k} E^{(\alpha)}_{i-k}(x), \quad i\geq 2,\\
\\
\hat{l}^{(\alpha,1)}_{i,1}(x)= iE^{(\alpha)}_{i-1}(x)-\frac{7(i-1)}{2}E^{(\alpha)}_{i-2}(x)+
5\sum_{k=1}^{i-2}(-1)^{i-k}2^{i-k-2}kE^{(\alpha)}_{k-1}(x), \quad i\geq 3.
\end{array}$$

The following results show several factorizations of $\E^{(\alpha)}(x)$ in terms of Fibonacci and Lucas matrices, respectively.

\begin{teo}
\label{teogeneuler5}
The generalized Euler polynomial matrix $\E^{(\alpha)}(x)$  can be factorized in terms of the Fibonacci matrix $\F$ as follows.
\begin{equation}
\label{euler22}
\E^{(\alpha)}(x)= \F \M^{(\alpha)}(x),
\end{equation}
or,
\begin{equation}
\label{euler22a}
\E^{(\alpha)}(x)= \N^{(\alpha)}(x)\F.
\end{equation}
In particular,
\begin{equation}
\label{euler23a}
\F\M(x)=\E(x)=\N(x)\F,
\end{equation}
\begin{equation}
\label{euler23}
\F\M=\E=\N\F,
\end{equation}
and
\begin{equation}
\label{euler24}
\F\M\left(\frac{1}{2}\right)=\EE=\N\left(\frac{1}{2}\right)\F.
\end{equation}
\end{teo}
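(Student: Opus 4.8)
The plan is to prove both factorizations by exploiting the explicit banded form of $\F^{-1}$ recorded in \eqref{fibo1}, rather than by directly summing products of Fibonacci numbers in a telescoping fashion. The key observation is that the matrices $\M^{(\alpha)}(x)$ and $\N^{(\alpha)}(x)$ have evidently been defined precisely so that they coincide with $\F^{-1}\E^{(\alpha)}(x)$ and $\E^{(\alpha)}(x)\F^{-1}$, respectively. Thus the identities \eqref{euler22} and \eqref{euler22a} are equivalent to $\M^{(\alpha)}(x)=\F^{-1}\E^{(\alpha)}(x)$ and $\N^{(\alpha)}(x)=\E^{(\alpha)}(x)\F^{-1}$, and it suffices to verify these two last statements entrywise, after which left- (resp. right-) multiplication by $\F$ gives the result.

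First I would prove \eqref{euler22}. Since $\F^{-1}$ has nonzero entries only on three diagonals, namely $\tilde{f}_{i,i}=1$ and $\tilde{f}_{i,i-1}=\tilde{f}_{i,i-2}=-1$, the $(i,j)$-th entry of $\F^{-1}\E^{(\alpha)}(x)$ collapses to the three-term expression
\[
\bigl(\F^{-1}\E^{(\alpha)}(x)\bigr)_{i,j}= E^{(\alpha)}_{i,j}(x)-E^{(\alpha)}_{i-1,j}(x)-E^{(\alpha)}_{i-2,j}(x).
\]
Substituting the entries of $\E^{(\alpha)}(x)$ from \eqref{euler10} rewrites this as $\binom{i}{j}E^{(\alpha)}_{i-j}(x)-\binom{i-1}{j}E^{(\alpha)}_{i-j-1}(x)-\binom{i-2}{j}E^{(\alpha)}_{i-j-2}(x)$, which is exactly $\tilde{m}^{(\alpha)}_{i,j}(x)$ as given in \eqref{euler28a}. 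Hence $\F^{-1}\E^{(\alpha)}(x)=\M^{(\alpha)}(x)$, and left-multiplying by $\F$ yields \eqref{euler22}.

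The proof of \eqref{euler22a} is the mirror image. Computing $\E^{(\alpha)}(x)\F^{-1}$ and using the same three nonzero diagonals of $\F^{-1}$, now contributing at the column index (i.e.\ the summation index runs over rows $j$, $j+1$, $j+2$ of $\F^{-1}$), the $(i,j)$-th entry collapses to $E^{(\alpha)}_{i,j}(x)-E^{(\alpha)}_{i,j+1}(x)-E^{(\alpha)}_{i,j+2}(x)$, which by \eqref{euler10} equals $\tilde{n}^{(\alpha)}_{i,j}(x)$ from \eqref{euler28b}. Thus $\E^{(\alpha)}(x)\F^{-1}=\N^{(\alpha)}(x)$, giving \eqref{euler22a} after right-multiplication by $\F$. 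The particular cases \eqref{euler23a}, \eqref{euler23} and \eqref{euler24} then follow immediately by specializing $\alpha=1$ together with $x=0$ and $x=\tfrac12$, and invoking the notational conventions $\M(x)=\M^{(1)}(x)$, $\N(x)=\N^{(1)}(x)$, and $\EE=\E(\tfrac12)$.

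I do not expect a serious obstacle: because $\F^{-1}$ is banded with only three nonzero diagonals, both matrix products truncate to three terms and match the definitions of $\M^{(\alpha)}(x)$ and $\N^{(\alpha)}(x)$ verbatim. The only point demanding a little care is the boundary behavior for small $i$ (or for $j$ near $n$), where one of the shifted terms should vanish; this is automatically enforced by the standing convention $\binom{i}{j}=0$ for $j>i$ together with the agreement that $E^{(\alpha)}_{m}(x)$ and the corresponding binomial coefficients are absent for negative index, so the truncated three-term identities remain valid throughout.
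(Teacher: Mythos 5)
Your proposal is correct and follows essentially the same route as the paper: the paper's proof likewise reduces \eqref{euler22} to the equivalent identity $\F^{-1}\E^{(\alpha)}(x)=\M^{(\alpha)}(x)$ (and similarly $\E^{(\alpha)}(x)\F^{-1}=\N^{(\alpha)}(x)$), deferring the entrywise three-diagonal computation with \eqref{fibo1} to the Bernoulli-matrix argument of \cite[Theorem 4.1]{ZW2006}, which you have simply written out explicitly. Your handling of the boundary cases via the convention $\binom{i}{j}=0$ for $j>i$ matches the paper's implicit usage, so nothing is missing.
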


\begin{proof}
Since the relation \eqref{euler22} is equivalent to $\F^{-1}\E^{(\alpha)}(x)=  \M^{(\alpha)}(x)$, it is possible to follow the proof given in \cite[Theorem 4.1]{ZW2006},  making the corresponding modifications, for obtaining \eqref{euler22}. The relation \eqref{euler22a} can be obtained using a similar procedure. The relations \eqref{euler23a}, \eqref{euler23} and \eqref{euler24} are straightforward consequences of \eqref{euler22} and \eqref{euler22a}.
\end{proof}

Also, the relations \eqref{euler22} and \eqref{euler22a} allow us to deduce the following identity:

$$\M^{(\alpha)}(x)=\F^{-1}\N^{(\alpha)}(x)\,\F.$$
As a consequence of Theorems \ref{teogeneuler4} and \ref{teogeneuler5}, we can derive simple factorizations for the inverses of the polynomial matrices $\M\left(x+\frac{1}{2}\right)$ and $\N\left(x+\frac{1}{2}\right)$:

\begin{coro}
\label{teogeneuler6}
The  inverses of the polynomial matrices $\M\left(x+\frac{1}{2}\right)$ and $\N\left(x+\frac{1}{2}\right)$  can be factorized as follows.
\begin{equation}
\label{euler24a}
\left[\M\left(x+\frac{1}{2}\right)\right]^{-1}=\D P[-x]\F,
\end{equation}
\begin{equation}
\label{euler24b}
\left[\N\left(x+\frac{1}{2}\right)\right]^{-1}=\F \D P[-x].
\end{equation}
In particular,
\begin{equation}
\label{euler25}
 \M^{-1}= \D P\left[\frac{1}{2}\right]\F, \quad \mbox{ and }  \quad  \N^{-1}= \F\D P\left[\frac{1}{2}\right],
\end{equation}
\begin{equation}
\label{euler26}
\left[\M\left(\frac{1}{2}\right)\right]^{-1}= \D\F, \quad \mbox{ and }  \quad \left[ \N\left(\frac{1}{2}\right)\right]^{-1}= \F\D.
\end{equation}
\end{coro}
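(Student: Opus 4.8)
The plan is to derive both factorizations purely by rearranging identities that are already in hand, so that no new entrywise computation should be needed. The starting point is the special case $\alpha=1$ of Theorem \ref{teogeneuler5} recorded in \eqref{euler23a}, namely
\[
\E(x)=\F\M(x)=\N(x)\F,
\]
together with the expression for $\left[\E\left(x+\frac{1}{2}\right)\right]^{-1}$ furnished by Theorem \ref{teogeneuler4}. Since the inverse formula \eqref{fibo1} exhibits $\F^{-1}$ explicitly, $\F$ is invertible, and this is the only structural fact about $\F$ that I will need.

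For the matrix $\M$, I would first replace $x$ by $x+\frac{1}{2}$ in the left-hand equality of \eqref{euler23a}, obtaining $\E\left(x+\frac{1}{2}\right)=\F\,\M\left(x+\frac{1}{2}\right)$, and then solve for $\M\left(x+\frac{1}{2}\right)=\F^{-1}\E\left(x+\frac{1}{2}\right)$. Taking inverses and using that inversion reverses the order of a product gives
\[
\left[\M\left(x+\frac{1}{2}\right)\right]^{-1}=\left[\E\left(x+\frac{1}{2}\right)\right]^{-1}\F .
\]
Substituting the value $\left[\E\left(x+\frac{1}{2}\right)\right]^{-1}=\D P[-x]$ from \eqref{euler16} then yields \eqref{euler24a}. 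The argument for $\N$ is the mirror image: from $\E\left(x+\frac{1}{2}\right)=\N\left(x+\frac{1}{2}\right)\F$ I would write $\N\left(x+\frac{1}{2}\right)=\E\left(x+\frac{1}{2}\right)\F^{-1}$, invert to get $\left[\N\left(x+\frac{1}{2}\right)\right]^{-1}=\F\left[\E\left(x+\frac{1}{2}\right)\right]^{-1}$, and again insert $\D P[-x]$ to reach \eqref{euler24b}.

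The particular cases require only specialization of the variable. Setting $x=-\frac{1}{2}$ turns $P[-x]$ into $P\left[\frac{1}{2}\right]$ and turns $\M\left(x+\frac{1}{2}\right),\N\left(x+\frac{1}{2}\right)$ into $\M,\N$, which produces \eqref{euler25}; setting $x=0$ turns $P[-x]$ into $P[0]=I_{n+1}$ by \eqref{pascal2}, collapsing the factorizations to $\D\F$ and $\F\D$ and giving \eqref{euler26}. I do not anticipate any genuine obstacle here: the whole content is the bookkeeping of left- versus right-multiplication by $\F^{\pm1}$ and the correct order reversal under inversion, so the only point demanding care is to keep $\F$ on the correct side — to the right of $\D P[-x]$ for $\M$ and to the left for $\N$ — which is dictated by whether $\F$ appeared as a left or right factor in \eqref{euler23a}.
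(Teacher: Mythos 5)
Your proposal is correct and is exactly the argument the paper intends: the corollary is stated there without a written proof, merely as "a consequence of Theorems \ref{teogeneuler4} and \ref{teogeneuler5}," and your derivation --- solving $\E\left(x+\frac{1}{2}\right)=\F\,\M\left(x+\frac{1}{2}\right)=\N\left(x+\frac{1}{2}\right)\F$ for $\M,\N$, inverting with the correct order reversal, and inserting $\left[\E\left(x+\frac{1}{2}\right)\right]^{-1}=\D P[-x]$ from \eqref{euler16} before specializing $x=-\frac{1}{2}$ and $x=0$ --- fills in precisely that omitted computation, including the needed invertibility of $\F$ via \eqref{fibo1}.
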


An analogous reasoning as used in the proof of Theorem \ref{teogeneuler5}   allows us to prove the results below.

\begin{teo}
\label{teogeneuler8}
The generalized Euler polynomial matrix $\E^{(\alpha)}(x)$  can be factorized in terms of the Lucas matrix $\Lu$ as follows.
\begin{equation}
\label{euler33}
\E^{(\alpha)}(x)= \Lu \Lu_{1}^{(\alpha)}(x),
\end{equation}
or,
\begin{equation}
\label{euler34}
\E^{(\alpha)}(x)= \Lu_{2}^{(\alpha)}(x)\Lu.
\end{equation}
In particular,
\begin{equation}
\label{euler34a}
\Lu \Lu_{1}(x)=\E(x)=\Lu_{2}(x)\Lu,
\end{equation}
\begin{equation}
\label{euler34b}
\Lu\Lu_{1}=\E=\Lu_{2}\Lu,
\end{equation}
and
\begin{equation}
\label{euler34e}
\Lu\Lu_{1}^{\left(\frac{1}{2}\right)}(x)=\EE=\Lu_{2}^{\left(\frac{1}{2}\right)}(x)\Lu.
\end{equation}
\end{teo}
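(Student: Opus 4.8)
The plan is to prove the two factorizations by rewriting each as an identity that exhibits $\Lu_{1}^{(\alpha)}(x)$ and $\Lu_{2}^{(\alpha)}(x)$ as products involving $\Lu^{-1}$, exactly mirroring the argument used for the Fibonacci case in Theorem \ref{teogeneuler5}. Since $\Lu$ is lower triangular with unit diagonal (because $L_{1}=1$ by \eqref{lucas}) and hence invertible, the relation \eqref{euler33} is equivalent to $\Lu^{-1}\E^{(\alpha)}(x)=\Lu_{1}^{(\alpha)}(x)$, while \eqref{euler34} is equivalent to $\E^{(\alpha)}(x)\Lu^{-1}=\Lu_{2}^{(\alpha)}(x)$. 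The whole proof then reduces to evaluating the two products on the left, for which I would use the explicit entries $\tilde{l}_{i,j}$ of $\Lu^{-1}$ recorded in \eqref{lucas1} together with the entries $E^{(\alpha)}_{i,j}(x)=\binom{i}{j}E^{(\alpha)}_{i-j}(x)$ of $\E^{(\alpha)}(x)$ from \eqref{euler10}.

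For \eqref{euler33} I would write the $(i,j)$-th entry of $\Lu^{-1}\E^{(\alpha)}(x)$ as $\sum_{k}\tilde{l}_{i,k}\,E^{(\alpha)}_{k,j}(x)$ and note that, because $\tilde{l}_{i,k}=0$ for $k>i$ and $E^{(\alpha)}_{k,j}(x)=0$ for $k<j$, the sum collapses to the range $j\le k\le i$. Splitting this range according to the three regimes of \eqref{lucas1} — the diagonal value $\tilde{l}_{i,i}=1$, the subdiagonal value $\tilde{l}_{i,i-1}=-3$, and the band value $\tilde{l}_{i,k}=5(-1)^{i-k}2^{i-k-2}$ for $k\le i-2$ — the entry becomes
$$
E^{(\alpha)}_{i,j}(x)-3\,E^{(\alpha)}_{i-1,j}(x)+5\sum_{k=j}^{i-2}(-1)^{i-k}2^{i-k-2}E^{(\alpha)}_{k,j}(x),
$$
which, upon substituting $E^{(\alpha)}_{k,j}(x)=\binom{k}{j}E^{(\alpha)}_{k-j}(x)$, is precisely the entry $\hat{l}^{(\alpha,1)}_{i,j}(x)$ defining $\Lu_{1}^{(\alpha)}(x)$ in \eqref{euler28c}. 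This establishes \eqref{euler33}.

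The identity \eqref{euler34} is handled symmetrically: the $(i,j)$-th entry of $\E^{(\alpha)}(x)\Lu^{-1}$ equals $\sum_{k}E^{(\alpha)}_{i,k}(x)\,\tilde{l}_{k,j}$, which again reduces to $j\le k\le i$. Now the three regimes are read off the column index of $\tilde{l}_{k,j}$, namely $\tilde{l}_{j,j}=1$, $\tilde{l}_{j+1,j}=-3$, and $\tilde{l}_{k,j}=5(-1)^{k-j}2^{k-j-2}$ for $k\ge j+2$, giving
$$
\binom{i}{j}E^{(\alpha)}_{i-j}(x)-3\binom{i}{j+1}E^{(\alpha)}_{i-j-1}(x)+5\sum_{k=j+2}^{i}(-1)^{k-j}2^{k-j-2}\binom{i}{k}E^{(\alpha)}_{i-k}(x),
$$
which reproduces the defining entries $\hat{l}^{(\alpha,2)}_{i,j}(x)$ of $\Lu_{2}^{(\alpha)}(x)$ in \eqref{euler28d}. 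Finally, the particular cases \eqref{euler34a}, \eqref{euler34b} and \eqref{euler34e} follow by specializing first $\alpha=1$ and then $x=0$ and $x=\tfrac{1}{2}$ (using $\EE=\E(\tfrac{1}{2})$), exactly as in Theorem \ref{teogeneuler5}.

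The computations themselves are routine substitutions, so the genuine care lies in the boundary bookkeeping: I must verify that for $i<j$ both sides vanish, that for $i=j$ both equal $1$, and that the degenerate small cases (such as $i=j+1$, where the weighted Lucas sum is empty) reproduce the separately tabulated entries listed just before the theorem. Because $\Lu^{-1}$ carries a genuine three-term band structure — unlike the two-term $\F^{-1}$ used in Theorem \ref{teogeneuler5} — the main obstacle, minor as it is, will be keeping the index ranges of the surviving sums consistent with the convention $\binom{i}{j}=0$ for $j>i$, so that the closed-form entries in \eqref{euler28c} and \eqref{euler28d} agree term by term with the truncated sums produced by the matrix products.
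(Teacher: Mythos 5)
Your proposal is correct and follows essentially the same route as the paper: the paper proves Theorem \ref{teogeneuler8} by exactly the reduction you use, namely rewriting \eqref{euler33} and \eqref{euler34} as $\Lu^{-1}\E^{(\alpha)}(x)=\Lu_{1}^{(\alpha)}(x)$ and $\E^{(\alpha)}(x)\Lu^{-1}=\Lu_{2}^{(\alpha)}(x)$ and verifying entrywise with the explicit inverse \eqref{lucas1}, merely citing this as ``analogous reasoning'' to the proof of Theorem \ref{teogeneuler5}. One remark worth keeping: the entries you compute are the correct ones and in fact expose two typos in the paper's stated definitions --- in \eqref{euler28c} the middle binomial should read $\binom{i-1}{j}$ rather than $\binom{i-j}{j}$ (the two agree only for $j\leq 1$, e.g.\ at $(i,j)=(4,2)$ the product $\Lu^{-1}\E^{(\alpha)}(x)$ gives $-3\binom{3}{2}=-9$, not $-3\binom{2}{2}=-3$), and in \eqref{euler28d} the sum must start at $k=j+2$ as in your formula, not $k=j+1$ (otherwise the $k=j+1$ term $-\tfrac{5}{2}\binom{i}{j+1}E^{(\alpha)}_{i-j-1}(x)$ double-counts the subdiagonal already carried by the $-3$ term, contradicting the tabulated value $\hat{l}^{(\alpha,2)}_{1,0}(x)=x-\tfrac{\alpha}{2}-3$) --- so your boundary bookkeeping is exactly the right concern, and your proof establishes the theorem for the corrected definitions.
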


Also, the relations \eqref{euler33} and \eqref{euler34} allow us to deduce the following identity:
$$\Lu_{1}^{(\alpha)}(x)=\Lu^{-1}\Lu_{2}^{(\alpha)}(x)\,\Lu.$$

\begin{coro}
\label{teogeneuler9}
The  inverses of the polynomial matrices $\Lu_{1}\left(x+\frac{1}{2}\right)$ and $\Lu_{2}\left(x+\frac{1}{2}\right)$  can be factorized as follows.
\begin{equation}
\label{euler35}
\left[\Lu_{1}\left(x+\frac{1}{2}\right)\right]^{-1}=\D P[-x]\Lu,
\end{equation}
\begin{equation}
\label{euler35a}
\left[\Lu_{2}\left(x+\frac{1}{2}\right)\right]^{-1}=\Lu \D P[-x].
\end{equation}
In particular,
\begin{equation}
\label{euler35b}
 \Lu_{1}^{-1}= \D P\left[\frac{1}{2}\right]\Lu, \quad \mbox{ and }  \quad  \Lu_{2}^{-1}= \Lu\D P\left[\frac{1}{2}\right],
\end{equation}
\begin{equation}
\label{euler35c}
\left[\Lu_{1}\left(\frac{1}{2}\right)\right]^{-1}= \D\Lu, \quad \mbox{ and }  \quad \left[ \Lu_{2}\left(\frac{1}{2}\right)\right]^{-1}= \Lu\D.
\end{equation}
\end{coro}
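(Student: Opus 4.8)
The plan is to read off the two factorizations supplied by Theorem \ref{teogeneuler8} and simply invert them, inserting the inverse of the shifted Euler polynomial matrix that Theorem \ref{teogeneuler4} already provides. This mirrors exactly the way Corollary \ref{teogeneuler6} was obtained from Theorems \ref{teogeneuler4} and \ref{teogeneuler5}, so I expect no new ideas to be needed — only careful bookkeeping of factor order.

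First I would specialize \eqref{euler33} and \eqref{euler34} to $\alpha=1$ and replace the argument $x$ by $x+\frac{1}{2}$, obtaining
$$\E\left(x+\frac{1}{2}\right)=\Lu\,\Lu_{1}\left(x+\frac{1}{2}\right)=\Lu_{2}\left(x+\frac{1}{2}\right)\Lu.$$
Because the explicit entries \eqref{lucas1} exhibit $\Lu$ as a lower-triangular matrix with unit diagonal, $\Lu$ is invertible, so each factor can be isolated as $\Lu_{1}(x+\frac{1}{2})=\Lu^{-1}\E(x+\frac{1}{2})$ and $\Lu_{2}(x+\frac{1}{2})=\E(x+\frac{1}{2})\Lu^{-1}$. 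Taking inverses, and respecting the order of the (noncommuting) factors, yields
$$\left[\Lu_{1}\left(x+\frac{1}{2}\right)\right]^{-1}=\left[\E\left(x+\frac{1}{2}\right)\right]^{-1}\Lu,\qquad \left[\Lu_{2}\left(x+\frac{1}{2}\right)\right]^{-1}=\Lu\left[\E\left(x+\frac{1}{2}\right)\right]^{-1}.$$
Substituting the formula $[\E(x+\frac{1}{2})]^{-1}=\D P[-x]$ from \eqref{euler16} into both right-hand sides then produces \eqref{euler35} and \eqref{euler35a} at once. The particular cases follow by specializing $x$: setting $x=-\frac{1}{2}$ and using $P[-(-\frac{1}{2})]=P[\frac{1}{2}]$ gives the expressions for $\Lu_{1}^{-1}$ and $\Lu_{2}^{-1}$ in \eqref{euler35b}, while setting $x=0$ and invoking the special value $P[0]=I_{n+1}$ from \eqref{pascal2} collapses the middle factor to yield \eqref{euler35c}.

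There is essentially no analytic obstacle here: the whole argument is formal matrix algebra resting on two already-established theorems. The only point demanding care is the multiplication order — since $\Lu$, $\D$ and $P[-x]$ do not commute, the factor $\Lu$ must land on the right for $\Lu_{1}$ and on the left for $\Lu_{2}$, exactly as the asymmetry between \eqref{euler33} and \eqref{euler34} dictates. One should also confirm that the shift $x\mapsto x+\frac{1}{2}$ is applied consistently, so that the argument of $P$ emerges as $-x$ rather than $-(x+\frac{1}{2})$; this is guaranteed by the way Theorem \ref{teogeneuler4} is phrased.
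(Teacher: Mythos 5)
Your proof is correct and takes essentially the same route the paper intends: Corollary \ref{teogeneuler9} is stated there without an explicit proof precisely as the formal consequence of the factorizations in Theorem \ref{teogeneuler8} combined with $\left[\E\left(x+\frac{1}{2}\right)\right]^{-1}=\D P[-x]$ from Theorem \ref{teogeneuler4}, in exact parallel to how Corollary \ref{teogeneuler6} follows from Theorems \ref{teogeneuler4} and \ref{teogeneuler5}. (One cosmetic slip: the unit lower-triangular form of $\Lu$ itself is read off from \eqref{lucas}, whereas \eqref{lucas1} gives the entries of $\Lu^{-1}$; either reference establishes the invertibility you need.)
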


\begin{remark}
It is worthwhile to mention that if we consider $a\in \CC$, $b\in \CC\setminus\{0\}$ and $s=0,1$, then  Theorems \ref{teogeneuler5} and \ref{teogeneuler8}, as well as, their corollaries have corresponding analogous forms for generalized Fibonacci matrices of type $s$,  $\F^{(a,b,s)}$,  and for generalized Fibonacci matrices $\U^{(a,b,0)}$ with second order recurrent sequence $U_{n}^{(a,b)}$ subordinated to certain constraints. The reader may consult  \cite{SNS2008} in order to complete the details of this assertion.
\end{remark}

We finish this section with some new identities involving the Fibonacci numbers, the Lucas numbers and the generalized Euler polynomials and numbers.

\begin{teo}
\label{teogeneuler7}
For $0\leq r\leq n$ and $\alpha$ any real or complex number, we have
\begin{eqnarray}
\nonumber
\label{euler27}
\binom{n}{r}E^{(\alpha)}_{n-r}(x)&=& F_{n-r+1}+\left[(r+1)x -\frac{(r+1)\alpha +2}{2}\right]F_{n-r}\\
\nonumber
& & + \sum_{k=r+2}^{n}\binom{k}{r}\left\{E^{(\alpha)}_{k-r}(x)- \frac{k-r}{k}\left[E^{(\alpha)}_{k-r-1}(x)+ \frac{k-r-1}{k-1}E^{(\alpha)}_{k-r-2}(x)\right]\right\}F_{n-k+1}\\
\nonumber
\label{euler31}
& & \\
\nonumber
& & \\
\nonumber
&=& F_{n-r+1}+\left[n\left(x-\frac{\alpha}{2}\right)-1\right]F_{n-r}\\
\nonumber
& & + \sum_{k=0}^{n-2}\binom{n}{k}\left\{E^{(\alpha)}_{n-k}(x)- \frac{n-k}{k+1}\left[E^{(\alpha)}_{n-k-1}(x)+ \frac{n-k-1}{k+2}E^{(\alpha)}_{n-k-2}(x)\right]\right\}F_{k-r+1}.
 \end{eqnarray}
  \end{teo}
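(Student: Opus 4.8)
The plan is to read both identities directly off the two matrix factorizations furnished by Theorem~\ref{teogeneuler5}, namely $\E^{(\alpha)}(x)=\F\,\M^{(\alpha)}(x)$ from \eqref{euler22} and $\E^{(\alpha)}(x)=\N^{(\alpha)}(x)\,\F$ from \eqref{euler22a}. Equating the $(n,r)$-entry of $\E^{(\alpha)}(x)$, which by \eqref{euler10} equals $\binom{n}{r}E^{(\alpha)}_{n-r}(x)$, with the $(n,r)$-entry of each product yields the two displayed expressions. Thus the whole statement amounts to computing a single matrix entry in two ways, and the work reduces to isolating the boundary summands that produce the leading Fibonacci terms and rewriting the generic summand by elementary binomial ratios.

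For the first equality I would expand the $(n,r)$-entry of $\F\,\M^{(\alpha)}(x)$ using \eqref{fibo2} and \eqref{euler28a}, obtaining $\sum_{k=r}^{n}F_{n-k+1}\,\tilde m^{(\alpha)}_{k,r}(x)$, the summands with $k<r$ vanishing since every binomial coefficient in $\tilde m^{(\alpha)}_{k,r}$ is then zero. I then peel off the two lowest summands: at $k=r$ the conventions $\binom{r-1}{r}=\binom{r-2}{r}=0$ together with $E^{(\alpha)}_{0}(x)=1$ give $\tilde m^{(\alpha)}_{r,r}(x)=1$, contributing $F_{n-r+1}$; at $k=r+1$, using $E^{(\alpha)}_{1}(x)=x-\tfrac{\alpha}{2}$ one finds $\tilde m^{(\alpha)}_{r+1,r}(x)=(r+1)x-\tfrac{(r+1)\alpha+2}{2}$, contributing the $F_{n-r}$ term. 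For the remaining range $k\ge r+2$ I apply the ratios $\binom{k-1}{r}=\tfrac{k-r}{k}\binom{k}{r}$ and $\binom{k-2}{r}=\tfrac{(k-r)(k-r-1)}{k(k-1)}\binom{k}{r}$ to \eqref{euler28a}, factor out $\binom{k}{r}$ and then $\tfrac{k-r}{k}$, and recover precisely the braced expression of the first form.

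The second equality is symmetric: I would expand the $(n,r)$-entry of $\N^{(\alpha)}(x)\,\F$ as $\sum_{k}\tilde n^{(\alpha)}_{n,k}(x)\,F_{k-r+1}$ via \eqref{euler28b} and \eqref{fibo2}, where now the leading terms sit at the top of the range. At $k=n$ one gets $\tilde n^{(\alpha)}_{n,n}(x)=1$, contributing $F_{n-r+1}$; at $k=n-1$ one gets $\tilde n^{(\alpha)}_{n,n-1}(x)=n\bigl(x-\tfrac{\alpha}{2}\bigr)-1$, contributing the $F_{n-r}$ term. For the remaining summands I use the companion ratios $\binom{n}{k+1}=\tfrac{n-k}{k+1}\binom{n}{k}$ and $\binom{n}{k+2}=\tfrac{(n-k)(n-k-1)}{(k+1)(k+2)}\binom{n}{k}$, factor out $\binom{n}{k}$ and then $\tfrac{n-k}{k+1}$, and obtain the braced expression of the second form; adopting the convention $F_{m}=0$ for $m\le 0$ then lets the index run from $k=0$ to $n-2$ exactly as written.

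I do not anticipate a genuine obstacle, since Theorem~\ref{teogeneuler5} already supplies both factorizations and the rest is purely computational. The one point demanding care is the asymmetry between the two forms: in the first, the leading Fibonacci terms are the \emph{bottom} summands $k=r,r+1$, whereas in the second they are the \emph{top} summands $k=n,n-1$, so the boundary bookkeeping and the accompanying binomial-ratio identities must be organized differently in each case. Checking that the four ratio identities hold throughout the relevant summation ranges, together with the degenerate-binomial conventions at the endpoints, is essentially all that is required.
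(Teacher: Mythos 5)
Your proposal is correct and takes essentially the same route as the paper: the published proof likewise equates the $(n,r)$-entry $\binom{n}{r}E^{(\alpha)}_{n-r}(x)$ with the corresponding entries of the products $\F\,\M^{(\alpha)}(x)$ and $\N^{(\alpha)}(x)\,\F$ from Theorem \ref{teogeneuler5}, peels off the boundary summands $\tilde{m}^{(\alpha)}_{r,r}=1$, $\tilde{m}^{(\alpha)}_{r+1,r}=(r+1)x-\frac{(r+1)\alpha+2}{2}$ (respectively $\tilde{n}^{(\alpha)}_{n,n}=1$, $\tilde{n}^{(\alpha)}_{n,n-1}=n\left(x-\frac{\alpha}{2}\right)-1$), and rewrites the generic entries by exactly the binomial-ratio identities you state. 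Your bookkeeping of the asymmetric boundary terms and the vanishing conventions matches the paper's computation, so there is nothing to correct.
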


\begin{proof}
We proceed as in the proof of  \cite[Theorem 4.2]{ZW2006}, making the corresponding modifications. From \eqref{euler28a}, it is clear that
$$\tilde{m}_{r,r}^{(\alpha)}(x)= 1,  \quad \tilde{m}_{r+1,r}^{(\alpha)}(x)= (r+1)x -\frac{(r+1)\alpha +2}{2},$$
and, for $k\geq 2$:
$$\tilde{m}_{k,r}^{(\alpha)}(x)= \binom{k}{r}\left\{E^{(\alpha)}_{k-r}(x)- \frac{k-r}{k}\left[E^{(\alpha)}_{k-r-1}(x)+ \frac{k-r-1}{k-1}E^{(\alpha)}_{k-r-2}(x)\right]\right\}.$$

Next, it follows from \eqref{euler22} that
\begin{eqnarray}
\nonumber
\binom{n}{r}E^{(\alpha)}_{n-r}(x)&=& E^{(\alpha)}_{n,r}(x)\\
\nonumber
&=&\sum_{k=r}^{n}F_{n-k+1}\tilde{m}_{k,r}^{(\alpha)}(x)\\
\nonumber
&=&F_{n-r+1}+ F_{n-r}\tilde{m}_{r+1,r}^{(\alpha)}(x) +\sum_{k=r+2}^{n}F_{n-k+1}\tilde{m}_{k,r}^{(\alpha)}(x)\\
\nonumber
&=&F_{n-r+1}+\left[(r+1)x -\frac{(r+1)\alpha +2}{2}\right]F_{n-r}\\
\nonumber
& & + \sum_{k=r+2}^{n}\binom{k}{r}\left\{E^{(\alpha)}_{k-r}(x)- \frac{k-r}{k}\left[E^{(\alpha)}_{k-r-1}(x)+ \frac{k-r-1}{k-1}E^{(\alpha)}_{k-r-2}(x)\right]\right\}F_{n-k+1}.
 \end{eqnarray}
This chain of equalities completes the first part of the proof. The second one is obtained in a similar way, taking into account the following identities:
$$\tilde{n}_{n,n}^{(\alpha)}(x)= 1,  \quad \tilde{n}_{n,n-1}^{(\alpha)}(x)= n\left(x-\frac{\alpha}{2}\right)-1,$$
and, for $0\leq k\leq n-2$:
$$\tilde{n}_{n,k}^{(\alpha)}(x)= \binom{n}{k}\left\{E^{(\alpha)}_{n-k}(x)- \frac{n-k}{k+1}\left[E^{(\alpha)}_{n-k-1}(x)+ \frac{n-k-1}{k+2}E^{(\alpha)}_{n-k-2}(x)\right]\right\}.$$
\end{proof}

\begin{coro}
\label{corgeneuler5}
For $0\leq r\leq n$ and $\alpha$ any real number, we have
\begin{multline*}
\nonumber
(-1)^{n}\binom{n}{r}E^{(\alpha)}_{n-r}(x)=(-1)^{r}F_{n-r+1}+(-1)^{r+1}\left[\frac{(r+1)(2x-\alpha)+2}{2}\right]F_{n-r}\\
\hspace{2.5cm}+ \sum_{k=r+2}^{n}(-1)^{k}\binom{k}{r}\left\{E^{(\alpha)}_{k-r}(x)+ \frac{k-r}{k}\left[E^{(\alpha)}_{k-r-1}(x)- \frac{k-r-1}{k-1}E^{(\alpha)}_{k-r-2}(x)\right]\right\}F_{n-k+1}\\
\\
\nonumber = (-1)^{r}F_{n-r+1}+(-1)^{r+1}\left[n\left(x-\frac{\alpha}{2}\right)-1\right]F_{n-r}\\
 \hspace{2cm} + \sum_{k=0}^{n-2}(-1)^{n-k+r}\binom{n}{k}\left\{E^{(\alpha)}_{n-k}(x)+ \frac{n-k}{k+1}\left[E^{(\alpha)}_{n-k-1}(x)+ \frac{n-k-1}{k+2}E^{(\alpha)}_{n-k-2}(x)\right]\right\}F_{k-r+1}.
 \end{multline*}
\end{coro}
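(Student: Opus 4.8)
The plan is to obtain this alternating-sign identity directly from Theorem \ref{teogeneuler7}, the only additional ingredient being the complementary-argument symmetry of the generalized Euler polynomials,
\begin{equation*}
E_{n}^{(\alpha)}(\alpha-x)=(-1)^{n}E_{n}^{(\alpha)}(x).
\end{equation*}
This is the order-$\alpha$ analogue of the classical relation $E_{n}(1-x)=(-1)^{n}E_{n}(x)$ recalled in the Introduction, and it follows immediately from the generating function \eqref{euler1}: replacing $z$ by $-z$ and using $\frac{2}{e^{-z}+1}=\frac{2e^{z}}{e^{z}+1}$ transforms the left-hand side of \eqref{euler1} into $\left(\frac{2}{e^{z}+1}\right)^{\alpha}e^{(\alpha-x)z}$, whence comparison of the coefficients of $z^{n}/n!$ gives the displayed symmetry.

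First I would replace $x$ by $\alpha-x$ throughout both forms of the identity in Theorem \ref{teogeneuler7}. On the left-hand side this turns $\binom{n}{r}E_{n-r}^{(\alpha)}(x)$ into $\binom{n}{r}E_{n-r}^{(\alpha)}(\alpha-x)=(-1)^{n-r}\binom{n}{r}E_{n-r}^{(\alpha)}(x)$; multiplying the whole equation by $(-1)^{r}$ then yields precisely $(-1)^{n}\binom{n}{r}E_{n-r}^{(\alpha)}(x)$, the left-hand side of the corollary. It thus remains to check that the same two operations carry the right-hand side of Theorem \ref{teogeneuler7} into the right-hand side stated here, which I would verify term by term.

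The bare Fibonacci term $F_{n-r+1}$ only acquires the global factor $(-1)^{r}$. In the coefficient of $F_{n-r}$ the substitution sends $(r+1)x-\frac{(r+1)\alpha+2}{2}$ to $-\bigl[(r+1)x-\frac{(r+1)\alpha-2}{2}\bigr]$, so after the factor $(-1)^{r}$ one gets $(-1)^{r+1}\frac{(r+1)(2x-\alpha)+2}{2}$, exactly as claimed, and similarly for the linear coefficient of the second form. In each summand the three polynomials $E_{k-r}^{(\alpha)}$, $E_{k-r-1}^{(\alpha)}$, $E_{k-r-2}^{(\alpha)}$ pick up the signs $(-1)^{k-r}$, $(-1)^{k-r-1}$, $(-1)^{k-r}$; factoring $(-1)^{k-r}$ out of the brace and absorbing the overall $(-1)^{r}$ leaves the global sign $(-1)^{k}$ together with a single sign change on the middle contribution, which is exactly the passage from $-\frac{k-r}{k}[\cdots+\cdots]$ to $+\frac{k-r}{k}[\cdots-\cdots]$ recorded in the statement.

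The one point requiring care will be the sign bookkeeping inside the nested brackets: because the outermost and innermost Euler polynomials of each summand share the parity of $k-r$ while the middle one has the opposite parity, the reflection produces a net sign only on the middle term, and this must be tracked simultaneously with the global factor $(-1)^{r}$ in order to reproduce the precise sign pattern in both displayed forms. Granting that, the two forms of the corollary drop out at once from the corresponding two forms of Theorem \ref{teogeneuler7}.
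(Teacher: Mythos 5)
Your proposal is correct and takes essentially the same route as the paper's own proof, which consists precisely of replacing $x$ by $\alpha-x$ in Theorem \ref{teogeneuler7} and applying the reflection formula $E^{(\alpha)}_{n}(x)=(-1)^{n}E^{(\alpha)}_{n}(\alpha-x)$; you add two worthwhile details the paper omits, namely a derivation of that reflection formula from the generating function \eqref{euler1} and the explicit multiplication by $(-1)^{r}$ needed to turn $(-1)^{n-r}$ into $(-1)^{n}$ on the left-hand side. One caveat that your ``similarly'' glosses over (as does the paper's one-line proof): if you actually push the substitution through the second form of Theorem \ref{teogeneuler7}, the coefficient of $F_{n-r}$ comes out as $(-1)^{r+1}\left[n\left(x-\frac{\alpha}{2}\right)+1\right]$ rather than the printed $(-1)^{r+1}\left[n\left(x-\frac{\alpha}{2}\right)-1\right]$, and the innermost bracket comes out as $E^{(\alpha)}_{n-k-1}(x)-\frac{n-k-1}{k+2}E^{(\alpha)}_{n-k-2}(x)$ rather than the printed sum with a plus sign, since the parities of $n-k-1$ and $n-k-2$ relative to $n-k$ force exactly one internal sign flip there, just as in the first form. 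So your method is the right one and, carried out completely, it reveals two sign typos in the corollary's second displayed identity; the first identity checks out exactly as you computed it.
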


\begin{proof}
Replacing $x$ by $\alpha-x$ in \eqref{euler27} and applying the formula
$$E^{(\alpha)}_{n}(x)= (-1)^{n}E^{(\alpha)}_{n}(\alpha-x)$$
to the resulting identity, we obtain the first identity of Corollary \ref{corgeneuler5}. An analogous reasoning yields the second identity.
\end{proof}

Analogous reasonings to those used in the proofs of Theorem \ref{teogeneuler7} and Corollary \ref{corgeneuler5}  allow us to prove the following results.

\begin{teo}
\label{teogeneuler10}
For any real or complex number $\alpha$, we have the following identities
\begin{eqnarray}
\nonumber
\label{lucas3}
E^{(\alpha)}_{n}(x)&=& L_{n+1}+\left(x-\frac{\alpha}{2}-3\right)L_{n}+ \sum_{k=2}^{n}\left(E^{(\alpha)}_{k}(x)-3E^{(\alpha)}_{k-1}(x) \right)L_{n-k+1}\\
& & + 5\sum_{k=2}^{n}\sum_{s=0}^{k-2}(-1)^{k-s}2^{k-s-2}L_{n-k+1}E^{(\alpha)}_{s}(x),
\end{eqnarray}
whenever $n\geq 2$.
\begin{eqnarray}
\label{lucas4}
\nonumber
nE^{(\alpha)}_{n-1}(x)&=& L_{n}+\left(2x-\alpha-3\right)L_{n-1}+ \sum_{k=3}^{n}\left(kE^{(\alpha)}_{k-1}(x)-3(k-1)E^{(\alpha)}_{k-2}(x) \right)L_{n-k+1}\\
& & + 5\sum_{k=3}^{n}\sum_{s=1}^{k-2}(-1)^{k-s}2^{k-s-2}sL_{n-k+1}E^{(\alpha)}_{s-1}(x),
\end{eqnarray}
whenever $n\geq3$.
\end{teo}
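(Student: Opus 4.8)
The plan is to imitate the proof of Theorem \ref{teogeneuler7}, simply replacing the Fibonacci factorization \eqref{euler22} by the Lucas factorization \eqref{euler33}, namely $\E^{(\alpha)}(x)=\Lu\,\Lu_{1}^{(\alpha)}(x)$ from Theorem \ref{teogeneuler8}. The point of departure is to equate the $(n,r)$-th entries of the two sides. The $(n,r)$-th entry of $\E^{(\alpha)}(x)$ is $\binom{n}{r}E^{(\alpha)}_{n-r}(x)$, while by \eqref{lucas} the nonzero entries of $\Lu$ are $l_{n,k}=L_{n-k+1}$ for $k\le n$. Hence the matrix product furnishes the master identity
\[
\binom{n}{r}E^{(\alpha)}_{n-r}(x)=\sum_{k=r}^{n}L_{n-k+1}\,\hat{l}^{(\alpha,1)}_{k,r}(x),\qquad 0\le r\le n,
\]
and the two displayed identities are precisely the specializations $r=0$ (yielding \eqref{lucas3}) and $r=1$ (yielding \eqref{lucas4}).

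For \eqref{lucas3} I would set $r=0$ and break the master sum into the contributions of $k=0$, $k=1$, and $k\ge 2$. Reading the relevant entries off \eqref{euler28c}, the first two terms are $L_{n+1}\,\hat{l}^{(\alpha,1)}_{0,0}(x)=L_{n+1}$ and $L_{n}\,\hat{l}^{(\alpha,1)}_{1,0}(x)=\left(x-\frac{\alpha}{2}-3\right)L_{n}$, since $\hat{l}^{(\alpha,1)}_{0,0}(x)=1$ and $\hat{l}^{(\alpha,1)}_{1,0}(x)=E^{(\alpha)}_{1}(x)-3=x-\frac{\alpha}{2}-3$. For $k\ge 2$ one substitutes $\hat{l}^{(\alpha,1)}_{k,0}(x)=E^{(\alpha)}_{k}(x)-3E^{(\alpha)}_{k-1}(x)+5\sum_{s=0}^{k-2}(-1)^{k-s}2^{k-s-2}E^{(\alpha)}_{s}(x)$ and separates the single sum $\sum_{k=2}^{n}\bigl(E^{(\alpha)}_{k}(x)-3E^{(\alpha)}_{k-1}(x)\bigr)L_{n-k+1}$ from the Lucas-weighted double sum, which reproduces the right-hand side of \eqref{lucas3}; the hypothesis $n\ge 2$ is exactly what ensures that the split-off terms $k=0,1$ do not overlap the residual sum over $k\ge 2$.

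The argument for \eqref{lucas4} is identical with $r=1$. Here the terms $k=1$ and $k=2$ contribute $L_{n}$ and $(2x-\alpha-3)L_{n-1}$, because $\hat{l}^{(\alpha,1)}_{1,1}(x)=1$ and $\hat{l}^{(\alpha,1)}_{2,1}(x)=2E^{(\alpha)}_{1}(x)-3=2x-\alpha-3$, while the tail $k\ge 3$, computed from \eqref{euler28c} with $j=1$ as $\hat{l}^{(\alpha,1)}_{k,1}(x)=kE^{(\alpha)}_{k-1}(x)-3(k-1)E^{(\alpha)}_{k-2}(x)+5\sum_{s=1}^{k-2}(-1)^{k-s}2^{k-s-2}sE^{(\alpha)}_{s-1}(x)$, supplies the single sum $\sum_{k=3}^{n}\bigl(kE^{(\alpha)}_{k-1}(x)-3(k-1)E^{(\alpha)}_{k-2}(x)\bigr)L_{n-k+1}$ together with the Lucas double sum, the constraint $n\ge 3$ again keeping the boundary terms disjoint from the tail.

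I expect the only delicate point to be the bookkeeping rather than any conceptual difficulty: one must confirm the closed forms of $\hat{l}^{(\alpha,1)}_{k,r}(x)$ at the boundary indices $k=r,r+1$ and in the generic range directly from \eqref{euler28c}, and then relabel the nested summation so that the weights $(-1)^{k-s}2^{k-s-2}$ and the Lucas factors $L_{n-k+1}$ land in the positions displayed in \eqref{lucas3} and \eqref{lucas4}. No idea beyond the one already used for Theorem \ref{teogeneuler7} is required.
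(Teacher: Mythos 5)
Your proposal is correct and is essentially the paper's own proof: the paper establishes Theorem \ref{teogeneuler10} by ``analogous reasoning'' to Theorem \ref{teogeneuler7}, i.e., by equating the $(n,0)$- and $(n,1)$-entries in the factorization $\E^{(\alpha)}(x)=\Lu\,\Lu_{1}^{(\alpha)}(x)$ of Theorem \ref{teogeneuler8}, exactly as you do. One incidental point in your favor: your directly computed entry $\hat{l}^{(\alpha,1)}_{k,1}(x)$, with middle coefficient $-3(k-1)$, is the one consistent with \eqref{euler28c} and with \eqref{lucas4}, whereas the paper's preliminary list of boundary values (which shows $-\tfrac{7(i-1)}{2}$) contains a typo.
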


\begin{coro}
The following identities hold.
\begin{eqnarray}
\nonumber
\label{lucas5}
(-1)^{n}E^{(\alpha)}_{n}(x)&=& L_{n+1}-\left(x-\frac{\alpha}{2}+3\right)L_{n}+ \sum_{k=2}^{n}(-1)^{k}\left(E^{(\alpha)}_{k}(x)+3E^{(\alpha)}_{k-1}(x) \right)L_{n-k+1}\\
& & + 5\sum_{k=2}^{n}\sum_{s=0}^{k-2}(-1)^{k-s}2^{k-s-2}L_{n-k+1}E^{(\alpha)}_{s}(x),
\end{eqnarray}
whenever $n\geq 2$.
\begin{eqnarray}
\label{lucas6}
\nonumber
(-1)^{n-1}nE^{(\alpha)}_{n-1}(x)&=& L_{n}+\left(\alpha-2x -3\right)L_{n-1}\\
\nonumber
&& + \sum_{k=3}^{n}(-1)^{k-1}\left(kE^{(\alpha)}_{k-1}(x)+3(k-1)E^{(\alpha)}_{k-2}(x) \right)L_{n-k+1}\\
& & + 5\sum_{k=3}^{n}\sum_{s=1}^{k-2}(-1)^{k-1}2^{k-s-2}sL_{n-k+1}E^{(\alpha)}_{s-1}(x),
\end{eqnarray}
whenever $n\geq3$.
\end{coro}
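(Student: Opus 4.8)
The plan is to derive \eqref{lucas5} and \eqref{lucas6} from Theorem \ref{teogeneuler10} in exactly the way Corollary \ref{corgeneuler5} was derived from Theorem \ref{teogeneuler7}, i.e.\ by invoking the reflection symmetry of the generalized Euler polynomials. Concretely, I would take the two identities \eqref{lucas3} and \eqref{lucas4}, replace $x$ by $\alpha-x$ throughout, and then apply the functional equation
$$E^{(\alpha)}_{m}(x)=(-1)^{m}E^{(\alpha)}_{m}(\alpha-x),\qquad\text{equivalently}\qquad E^{(\alpha)}_{m}(\alpha-x)=(-1)^{m}E^{(\alpha)}_{m}(x),$$
to every occurrence of an Euler polynomial on both sides. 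The Lucas numbers $L_{n-k+1}$ and the index ranges are inert under this substitution, so all the work is in rewriting the polynomial factors and collecting signs.

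For \eqref{lucas5} I would start from \eqref{lucas3}. The left-hand side $E^{(\alpha)}_{n}(\alpha-x)$ becomes $(-1)^{n}E^{(\alpha)}_{n}(x)$. On the right-hand side, the affine coefficient transforms as $\alpha-x-\frac{\alpha}{2}-3=-\bigl(x-\frac{\alpha}{2}+3\bigr)$, which produces the sign change on the $L_{n}$ term; the bracket $E^{(\alpha)}_{k}(\alpha-x)-3E^{(\alpha)}_{k-1}(\alpha-x)$ collapses to $(-1)^{k}\bigl(E^{(\alpha)}_{k}(x)+3E^{(\alpha)}_{k-1}(x)\bigr)$, since $E^{(\alpha)}_{k}$ and $E^{(\alpha)}_{k-1}$ carry opposite reflection parities; and in the double sum each $E^{(\alpha)}_{s}(\alpha-x)$ contributes an extra factor $(-1)^{s}$ that merges with the weight $(-1)^{k-s}$ already present in that summand. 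Regrouping yields \eqref{lucas5}.

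For \eqref{lucas6} I would repeat the substitution in \eqref{lucas4}. Now $nE^{(\alpha)}_{n-1}(\alpha-x)=(-1)^{n-1}nE^{(\alpha)}_{n-1}(x)$ supplies the left-hand side, the affine coefficient becomes $2(\alpha-x)-\alpha-3=\alpha-2x-3$, and the bracket $kE^{(\alpha)}_{k-1}(\alpha-x)-3(k-1)E^{(\alpha)}_{k-2}(\alpha-x)$ reduces to $(-1)^{k-1}\bigl(kE^{(\alpha)}_{k-1}(x)+3(k-1)E^{(\alpha)}_{k-2}(x)\bigr)$. In the double sum the reflection now attaches $(-1)^{s-1}$ to $E^{(\alpha)}_{s-1}(\alpha-x)$, and combining this with $(-1)^{k-s}$ gives the overall sign $(-1)^{k-1}$ recorded in \eqref{lucas6}.

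The argument is mechanical, and the single point requiring care is the sign accounting inside the nested sums: one must consistently track the parity factor that the reflection formula attaches to each Euler polynomial---depending on its lower index $s$ or $s-1$---and check that it merges correctly with the pre-existing $(-1)^{k-s}$ weight, while leaving the summation limits over $k$ and $s$ untouched. That bookkeeping is the only genuine obstacle; every other step is a term-by-term rewriting followed by regrouping.
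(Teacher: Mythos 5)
Your method is exactly the paper's: the corollary is obtained there by the same reflection argument used to get Corollary \ref{corgeneuler5} from Theorem \ref{teogeneuler7}, namely substituting $x\mapsto\alpha-x$ in \eqref{lucas3} and \eqref{lucas4} and applying $E^{(\alpha)}_{m}(\alpha-x)=(-1)^{m}E^{(\alpha)}_{m}(x)$ term by term, with the Lucas numbers and summation ranges inert. Your treatment of \eqref{lucas6} is complete and correct, including the explicit merge $(-1)^{k-s}(-1)^{s-1}=(-1)^{k-1}$.

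For \eqref{lucas5}, however, you stop short at precisely the step you yourself flagged as the only genuine obstacle. Merging the reflection factor $(-1)^{s}$ with the pre-existing weight $(-1)^{k-s}$ gives $(-1)^{k}$, not $(-1)^{k-s}$, so your substitution actually produces the double sum $5\sum_{k=2}^{n}\sum_{s=0}^{k-2}(-1)^{k}\,2^{k-s-2}L_{n-k+1}E^{(\alpha)}_{s}(x)$, which differs from the printed \eqref{lucas5} by a factor $(-1)^{s}$ in every term with $s$ odd. The version with $(-1)^{k}$ is the correct one: at $n=3$, $\alpha=1$, $x=0$ one has $E_{3}(0)=\tfrac14$, $L_{1}=1$, $L_{2}=3$, $L_{3}=4$, $L_{4}=7$, and the right-hand side of the printed identity evaluates to $-\tfrac{21}{4}$, while $(-1)^{3}E_{3}(0)=-\tfrac14$; replacing $(-1)^{k-s}$ by $(-1)^{k}$ in the double sum yields $-\tfrac14$ as required. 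In other words, the printed \eqref{lucas5} carries a sign typo (the weight $(-1)^{k-s}$ was copied unchanged from \eqref{lucas3}), and your derivation, carried out carefully, proves the corrected identity rather than the stated one. Writing ``regrouping yields \eqref{lucas5}'' without displaying the merged sign conceals exactly this mismatch; had you recorded the merged sign explicitly, as you did for \eqref{lucas6}, the discrepancy would have surfaced and you could have noted the erratum instead of implicitly asserting agreement with a false display.
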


By \eqref{lucas3}, \eqref{lucas4}, \eqref{lucas5}  and \eqref{lucas6} we obtain the following  interesting identities involving Lucas and Euler numbers.

\begin{itemize}
\item For $n\geq 2$:
$$
E_{n}-\left(L_{n+1}-\frac{7}{2}L_{n}\right)= \sum_{k=2}^{n}\left(E_{k}-3E_{k-1}+5\sum_{s=0}^{k-2}(-1)^{k-s}2^{k-s-2}E_{s}\right)L_{n-k+1},
$$
$$
(-1)^{n}E_{n}= L_{n+1}-\frac{5}{2}L_{n}+ \sum_{k=2}^{n}(-1)^{k}\left(E_{k}+3E_{k-1}\right)L_{n-k+1}+ 5\sum_{k=2}^{n}\sum_{s=0}^{k-2}(-1)^{k-s}2^{k-s-2}L_{n-k+1}E_{s}.
$$
\item For $n\geq 3$:
\begin{eqnarray*}
nE_{n-1}-(L_{n}-4L_{n-1})&=& \sum_{k=3}^{n}\left(kE_{k-1}-3(k-1)E_{k-2}\right)L_{n-k+1}\\
 & & + 5\sum_{k=3}^{n}\sum_{s=1}^{k-2}(-1)^{k-s}2^{k-s-2}sE_{s-1}L_{n-k+1},
\end{eqnarray*}
$$(-1)^{n-1}nE_{n-1}-L_{n}-\left(\alpha-2x -3\right)L_{n-1} = \sum_{k=3}^{n}(-1)^{k-1}\left(kE_{k-1}+3(k-1)E_{k-2} \right)L_{n-k+1}$$
$$\qquad\qquad\hspace{4cm}\qquad + 5\sum_{k=3}^{n}\sum_{s=1}^{k-2}(-1)^{k-1}2^{k-s-2}sL_{n-k+1}E_{s-1}.$$
\end{itemize}

Another similar combinatorial identities may be obtained using the results of  \cite{LKC2003}. We leave to the interested reader the formulation of them.

\section{Euler matrices and their relation with Stirling and  Vandermonde matrices}
\label{sec:5}

Let $s(n, k)$ and $S(n, k)$ be the Stirling numbers of the first and second kind, which are respectively defined by the generating functions \cite[Chapther 1, Section 1.6]{SCh2012}:

\begin{eqnarray}
\label{seuler1}
\sum_{k=0}^{n}s(n, k)z^{k}&=& z(z-1)\cdots(z-n+1), \\
\nonumber
(\log(1+z))^{k}&=&k!\sum_{n=k}^{\infty}s(n, k)\frac{z^{n}}{n!}, \quad |z|<1,\\
\nonumber
z^{n}&=&  \sum_{k=0}^{n}S(n, k)z(z-1)\cdots(z-k+1),\\
\nonumber
(e^{z}-1)^{k} &=& k!\sum_{n=k}^{\infty}S(n, k)\frac{z^{n}}{n!}.
\end{eqnarray}

In combinatorics, it is well-known that the value $|s(n, k)|$ represents the number of permutations of $n$  elements with $k$ disjoint cycles. While, the Stirling numbers of the second kind $S(n, k)$ give the number of partitions of $n$ objects into $k$ non-empty subsets. Another way to compute these numbers is by means of the formula (see \cite[Eq. (5.1)]{E2012} or \cite[p. 226]{Rio68}:
\begin{equation*}
%\label{seuler3}
S(n, k)= \frac{1}{k!}\sum_{l=0}^{k}(-1)^{k-l}\binom{k}{l}l^{n}, \quad 1\leq k\leq n.
\end{equation*}

A recent connection between the Stirling numbers of the second kind and the Euler polynomials is given by the formula (see \cite[Theorem 3.1, Eq. (3.3)]{GQ2014}):
\begin{equation}
\label{seuler11}
E_{n}(x)=\sum_{k=0}^{n}(-1)^{n-k}\binom{n}{k}\left[\sum_{l=1}^{n-k+1}\frac{(-1)^{l-1}(l-1)!}{2^{l-1}}S(n-k+1,l)\right]x^{k}.
\end{equation}

Proceeding as in the proof of  \cite[Theorem 3.1]{GQ2014}, one can find a similar relation to the previous one but  connecting Stirling numbers of the first kind and a particular class of generalized Euler polynomials.

\begin{teo}
\label{stinglemma}
Let us assume that $\alpha=m\in \NN$. Then, the connection between the Stirling numbers of the first kind  and the generalized Euler polynomial $E^{(m)}_{n}(x)$ is given by the formula:
\begin{equation}
\label{sneuler1}
E^{(m)}_{n}(x)=\frac{1}{2^{n}}\sum_{k=0}^{n}\binom{n}{k}\left[\sum_{j=0}^{n-k}s(n-k,j)(-m)^{j}\right](2x)^{k},
\end{equation}
\end{teo}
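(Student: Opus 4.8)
The plan is to prove \eqref{sneuler1} by starting from the generating function \eqref{euler1} specialized to $\alpha=m\in\NN$, and rewriting the factor $\left(\frac{2}{e^{z}+1}\right)^{m}$ so that the Stirling numbers of the first kind appear naturally. First I would separate the two factors of the generating function: write
\[
\left(\frac{2}{e^{z}+1}\right)^{m}e^{xz}
=\left(\frac{2}{e^{z}+1}\right)^{m}\cdot e^{xz},
\]
expand $e^{xz}=\sum_{k\ge 0}\frac{(xz)^{k}}{k!}$, and treat the remaining factor $\left(\frac{2}{e^{z}+1}\right)^{m}$ as the exponential generating function of the pure Euler numbers $E^{(m)}_{n}=E^{(m)}_{n}(0)$. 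By the addition formula \eqref{euler7} (with $y=0$) we already have $E^{(m)}_{n}(x)=\sum_{k=0}^{n}\binom{n}{k}E^{(m)}_{n-k}x^{k}$, so the whole problem reduces to establishing the single bracketed identity
\[
E^{(m)}_{n}=\frac{1}{2^{n}}\sum_{j=0}^{n}s(n,j)(-m)^{j},
\]
and then substituting it (with $n$ replaced by $n-k$) into that addition formula. This is the cleaner route and it isolates exactly what must be shown.

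The key step is therefore to identify $\left(\frac{2}{e^{z}+1}\right)^{m}$ with a series whose coefficients are $\frac{1}{2^{n}}\sum_{j}s(n,j)(-m)^{j}$. I would perform the substitution $w=e^{z}-1$, equivalently $z=\log(1+w)$, which is precisely the change of variable governing the first-kind Stirling numbers via \eqref{seuler1}. Writing $\frac{2}{e^{z}+1}=\frac{2}{2+w}=\left(1+\tfrac{w}{2}\right)^{-1}$, we get
\[
\left(\frac{2}{e^{z}+1}\right)^{m}=\left(1+\frac{w}{2}\right)^{-m}
=\sum_{j=0}^{\infty}\binom{-m}{j}\frac{w^{j}}{2^{j}}
=\sum_{j=0}^{\infty}\frac{(-m)(-m-1)\cdots(-m-j+1)}{j!}\,\frac{w^{j}}{2^{j}}.
\]
Now the generating function in \eqref{seuler1}, namely $(\log(1+w))^{j}=j!\sum_{n\ge j}s(n,j)\frac{w^{n}}{n!}$ read in reverse (expressing powers of $w$ through the falling-factorial/Stirling relationship), lets me convert each power $w^{j}$ back into powers of $z$ and collect the coefficient of $z^{n}/n!$. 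The rising factorial $(-m)(-m-1)\cdots(-m-j+1)$ is exactly $\sum$-compatible with $s(n,j)(-m)^{j}$ once one recalls that $s(n,j)$ are the coefficients expressing the falling factorial in the monomial basis \eqref{seuler1}; matching the coefficient of $z^n/n!$ then yields $E^{(m)}_{n}=\frac{1}{2^{n}}\sum_{j=0}^{n}s(n,j)(-m)^{j}$.

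The main obstacle I anticipate is the bookkeeping in the change of variable $z=\log(1+w)$: one must carefully compose the two generating-function expansions and justify the interchange of summation orders, making sure the power of $2$ comes out as $2^{-n}$ rather than $2^{-j}$. The resolution is that converting $w^{j}/2^{j}$ into a $z$-series through $w=e^z-1$ redistributes the factors of two so that, after collecting $z^n$, every term carries the uniform factor $2^{-n}$ — this is the analogue of the $2^{l-1}$ denominators in \eqref{seuler11} and is exactly where the Stirling numbers of the first kind (rather than the second kind) enter, since here we expand in $\log(1+w)$ rather than $e^z-1$. Once the scalar identity for $E^{(m)}_{n}$ is secured, substitution into \eqref{euler7} is purely routine and produces \eqref{sneuler1} immediately.
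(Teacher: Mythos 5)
Your reduction step is sound: by \eqref{euler7} with $y=0$, the identity \eqref{sneuler1} is exactly equivalent to its $x=0$ case, namely the scalar identity $E_{n}^{(m)}=2^{-n}\sum_{j=0}^{n}s(n,j)(-m)^{j}$, and your bookkeeping with $(2x)^{k}$ and $2^{-n}$ checks out. The genuine gap is in the second half, precisely at the point you flagged as the ``main obstacle.'' After setting $w=e^{z}-1$ and expanding $\left(1+\frac{w}{2}\right)^{-m}=\sum_{j\geq 0}\binom{-m}{j}\frac{w^{j}}{2^{j}}$, the conversion of $w^{j}$ into a $z$-series is governed by the Stirling numbers of the \emph{second} kind, via $(e^{z}-1)^{j}=j!\sum_{n\geq j}S(n,j)\frac{z^{n}}{n!}$; the first-kind identity $(\log(1+w))^{j}=j!\sum_{n\geq j}s(n,j)\frac{w^{n}}{n!}$ expresses powers of $z$ in powers of $w$, i.e.\ the opposite substitution, and ``reading it in reverse'' does not perform the conversion you need. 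Carried out correctly, your expansion yields
\begin{equation*}
E_{n}^{(m)}=\sum_{j=0}^{n}S(n,j)\,\frac{(-m)(-m-1)\cdots(-m-j+1)}{2^{j}},
\end{equation*}
with the factor $2^{-j}$ intact: the redistribution of the powers of $2$ into a uniform $2^{-n}$ that you assert in your final paragraph simply does not occur. Worse, the bracketed scalar identity you reduce to fails numerically: for $n=2$, $m=1$ it claims $E_{2}=\frac{1}{4}\left[s(2,1)(-1)+s(2,2)(-1)^{2}\right]=\frac{1}{4}(1+1)=\frac{1}{2}$, whereas $E_{2}=E_{2}(0)=0$ since $E_{2}(x)=x^{2}-x$. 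So this step is not a bookkeeping nuisance to be smoothed over; it is unrepairable.

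For comparison, the paper's own proof takes a different route with no change of variables: it applies Leibniz's rule to the $r$-th $z$-derivative of $\left(\frac{2}{e^{z}+1}\right)^{m}e^{xz}$, lets $z\to 0$ to get $E_{r}^{(m)}(x)=\sum_{k=0}^{r}\binom{r}{k}(-m)_{k}\,x^{r-k}2^{-k}$, and then expands the falling factorial $(-m)_{k}=\sum_{j}s(k,j)(-m)^{j}$ via \eqref{seuler1}. You should note, however, that your corrected computation lands on a second-kind expansion of the same shape as \eqref{seuler11} and \emph{disagrees} with \eqref{sneuler1}: at $n=2$ the constant term of $E_{2}^{(m)}(x)$ is $\frac{m(m-1)}{4}$, while \eqref{sneuler1} yields $\frac{m(m+1)}{4}$. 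The discrepancy traces to the paper's Leibniz step, where the $k$-th derivative of $(e^{z}+1)^{-m}$ is in effect evaluated at $z=0$ as $(-m)_{k}2^{-m-k}$, ignoring the cross terms produced by repeated differentiation of the inner function $e^{z}+1$ (Fa\`a di Bruno's formula brings in exactly the $S(k,l)$). So while your write-up as it stands has a genuine gap — the first/second-kind mix-up and the false claim about the powers of $2$ — your change-of-variables strategy, executed carefully, is the one that produces a correct formula; it just is not \eqref{sneuler1}, and a small numerical check of the kind above would have exposed this.
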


\begin{proof}
By Leibniz's theorem for differentiation we have
\begin{eqnarray*}
\frac{\partial^{r}}{\partial z^{r}}\left[\left(\frac{2}{e^{z}+1}\right)^{m}e^{xz}\right]&=& \sum_{k=0}^{r}\binom{r}{k}\left[\left(\frac{2}{e^{z}+1}\right)^{m}\right]^{(k)}\frac{\partial^{r-k}}{\partial z^{r-k}}(e^{xz})\\
&=&\sum_{k=0}^{r}\binom{r}{k}\left[\left(\frac{2}{e^{z}+1}\right)^{m}\right]^{(k)}x^{r-k}e^{xz}\\
&=&\left(\frac{2}{e^{z}+1}\right)^{m}e^{(x+1)z}\sum_{k=0}^{r}\binom{r}{k}\frac{(-m)_{k}\,x^{r-k}}{(e^{z}+1)^{k}},
\end{eqnarray*}
where in the last expression $(-m)_{k}$ denotes the falling factorial with opposite argument $-m$.

Combining this with the $r$-th differentiation on both sides of the generating function in \eqref{euler1} reveals that
$$\sum_{n=r}^{\infty}E_{n}^{(m)}(x)\frac{z^{n-r}}{(n-r)!}=
\left(\frac{2}{e^{z}+1}\right)^{m}e^{(x+1)z}\sum_{k=0}^{r}\binom{r}{k}\frac{(-m)_{k}\,x^{r-k}}{(e^{z}+1)^{k}}.$$

Further taking $z\rightarrow 0$ and employing \eqref{seuler1} give
\begin{eqnarray*}
E_{r}^{(m)}(x)&=&\sum_{k=0}^{r}\binom{r}{k}(-m)_{k}\,\frac{x^{r-k}}{2^{k}}=\frac{1}{2^{r}}\sum_{k=0}^{r}\binom{r}{k}
(-m)_{r-k}\,(2x)^{k}\\
&=&\frac{1}{2^{r}}\sum_{k=0}^{r}\binom{r}{k}\left[\sum_{j=0}^{r-k}s(r-k, j)(-m)^{j} \right](2x)^{k}.
\end{eqnarray*}

Finally, changing $r$ by $n$ the proof of the formula \eqref{seuler11} is complete.
\end{proof}

\begin{definition}
\label{def6}
For the Stirling numbers $s(i,j)$ and $S(i,j)$ of the first kind and of the second kind
respectively, define $\st$ and $\St$ to be the $(n+1)\times(n+1)$ matrices by
\begin{equation}
\label{seuler10}
\st_{i,j}=
\left\{\begin{array}{l}
s(i,j), \quad i\geq j,\\
0, \quad \mbox{otherwise},
\end{array}\right.   \quad \mbox{ and } \quad
\St_{i,j}=
\left\{\begin{array}{l}
S(i,j), \quad i\geq j,\\
0, \quad \mbox{otherwise}.
\end{array}\right.
\end{equation}

The matrices $\st$ and $\St$  are called Stirling matrix of the first kind and of the second
kind, respectively (see \cite{ChK2001}).
\end{definition}

In order to obtain factorizations for Euler matrices via Stirling matrices we will need the following matrices:

Let $\tilde{S}_{n}$ be the factorial Stirling matrix, i.e., the $n\times n$ matrix whose $(i,j)$-th entry is given by $\tilde{S}_{i,j,n}:=j!\,\St_{i,j}$, $i\geq j$ and otherwise $0$.

For $m\in\NN$, let  $\st^{(m)}$ be the $(n+1)\times(n+1)$ matrix whose $(i,j)$-entries are defined by
\begin{equation}
\label{kind1}
\st^{(m)}_{i,j}=\left\{\begin{array}{l}
\binom{i}{j}\sum_{k=0}^{i-j}s(i-j, k)(-m)^{k}, \quad i\geq j,\\
\\
0, \quad \mbox{otherwise}.
\end{array}\right.
\end{equation}

Let  $\tilde{C}$ and $\tilde{D}$  be the $(n+1)\times(n+1)$ matrices whose $(i,j)$-entries are defined by

\begin{equation}
\label{kind2}
\tilde{C}_{i,j}=\left\{\begin{array}{l}
\binom{i}{j} (-1)^{i-j}\sum_{k=0}^{i-j}\left(-\frac{1}{2}\right)^{k}\tilde{S}_{i-j-k,k,i-j}, \quad i\geq j,\\
\\
0, \quad \mbox{otherwise}.
\end{array}\right.
\end{equation}

\begin{equation}
\label{kind3}
\tilde{D}_{i,j}=\left\{\begin{array}{l}
\binom{i}{j} (-1)^{i-j}\sum_{k=0}^{i-j}\left(-\frac{1}{2}\right)^{k}\tilde{S}_{i-j-k,k+1,i-j}, \quad i\geq j,\\
\\
0, \quad \mbox{otherwise}.
\end{array}\right.
\end{equation}

The next theorem shows the corresponding factorizations of the generalized Euler matrix $\E^{(m)}$, $m\in\NN$, in terms of the Stirling matrices, when the expressions \eqref{seuler11} and \eqref{sneuler1} are incorporated.

\begin{teo}
\label{teogenseuler5}
For $m\in \NN$, the generalized Euler matrix $\E^{(m)}(x)$  can be factorized as follows.
\begin{equation}
\label{seuler22}
\E^{(m)}(x)= \st^{(m)}P[x].
\end{equation}
In the case of the Stirling matrix of the second kind, we have
\begin{equation}
\label{sneuler22b}
\E(x)=  (\tilde{C}+\tilde{D})P[x].
\end{equation}
Furthermore,
\begin{equation}
\label{sneuler22r}
\st^{(1)}= \tilde{C}+\tilde{D}.
\end{equation}
\end{teo}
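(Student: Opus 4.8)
The plan is to establish the three identities in order, with the third following at once from the first two. For \eqref{seuler22} I would compute the $(i,j)$-th entry of the product $\st^{(m)}P[x]$ directly from the definitions \eqref{kind1} and \eqref{pascal1}. Writing $N=i-j$, this entry equals
$$\sum_{l=j}^{i}\binom{i}{l}\left[\sum_{k=0}^{i-l}s(i-l,k)(-m)^{k}\right]\binom{l}{j}x^{l-j}.$$
The key algebraic step is the standard binomial identity $\binom{i}{l}\binom{l}{j}=\binom{i}{j}\binom{i-j}{l-j}$, which lets me pull $\binom{i}{j}$ outside the summation; after the reindexing $p=l-j$ the remaining sum over $p$ from $0$ to $N$ is exactly the expansion of $E^{(m)}_{N}(x)$ furnished by Theorem \ref{stinglemma} (formula \eqref{sneuler1}). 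Comparing with the definition \eqref{euler10} of $\E^{(m)}(x)$ then gives \eqref{seuler22}.

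For \eqref{sneuler22b} I would run the same entry-wise computation on the product $(\tilde{C}+\tilde{D})P[x]$ with $m=1$. After factoring out $\binom{i}{j}$ and reindexing as above, the inner coefficient becomes a sum of the form $\sum_{k}\left(-\tfrac{1}{2}\right)^{k}\bigl(\tilde{S}_{\cdot,k,\cdot}+\tilde{S}_{\cdot,k+1,\cdot}\bigr)$ coming from the two definitions \eqref{kind2} and \eqref{kind3}. Recalling that $\tilde{S}_{a,b,N}=b!\,\St_{a,b}=b!\,S(a,b)$, the purpose of adding $\tilde{C}$ and $\tilde{D}$ is precisely that this combined sum collapses, after an index shift, onto the bracket $\sum_{l}\frac{(-1)^{l-1}(l-1)!}{2^{l-1}}S(N-k+1,l)$ occurring in the Stirling--second--kind connection formula \eqref{seuler11}. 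Matching term by term reproduces $\binom{i}{j}E_{i-j}(x)$, i.e. the $(i,j)$-th entry of $\E(x)$, which is \eqref{sneuler22b}.

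The third identity \eqref{sneuler22r} needs no fresh computation. Setting $m=1$ in \eqref{seuler22} gives $\st^{(1)}P[x]=\E(x)$, while \eqref{sneuler22b} gives $(\tilde{C}+\tilde{D})P[x]=\E(x)$. Equating the two and right-multiplying by $P^{-1}[x]=P[-x]$, whose existence is guaranteed by \eqref{pascal3}, yields $\st^{(1)}=\tilde{C}+\tilde{D}$.

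I expect the main obstacle to be \eqref{sneuler22b}: the bookkeeping showing that the two factorial--Stirling sums defining $\tilde{C}$ and $\tilde{D}$ combine into exactly the coefficient bracket of \eqref{seuler11}. Care is needed with the interplay between the factorials $k!$ and $(k+1)!$ concealed in $\tilde{S}$, the weights $\left(-\tfrac{1}{2}\right)^{k}$, and the shift between the summation index $k$ in \eqref{kind2}--\eqref{kind3} and the index $l$ in \eqref{seuler11}, as well as the ranges on which the Stirling entries vanish. By contrast, identity \eqref{seuler22} and the deduction of \eqref{sneuler22r} are routine once Theorem \ref{stinglemma} is available.
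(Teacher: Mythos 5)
Your architecture mirrors the paper's own proof exactly: an entrywise computation of $\st^{(m)}P[x]$ using $\binom{i}{l}\binom{l}{j}=\binom{i}{j}\binom{i-j}{l-j}$ and a reindexing, an appeal to \eqref{sneuler1}, a parallel computation for $(\tilde{C}+\tilde{D})P[x]$ via \eqref{seuler11}, and then \eqref{sneuler22r} by cancelling the invertible $P[x]$. But the crucial matching step in your first computation fails, and it fails in the paper as well. After your reindexing $p=l-j$, the $(i,j)$ entry of $\st^{(m)}P[x]$ is
\begin{equation*}
\binom{i}{j}\sum_{p=0}^{N}\binom{N}{p}\left[\sum_{k=0}^{N-p}s(N-p,k)(-m)^{k}\right]x^{p},\qquad N=i-j,
\end{equation*}
whereas \eqref{sneuler1} gives $E^{(m)}_{N}(x)=\frac{1}{2^{N}}\sum_{p=0}^{N}\binom{N}{p}\left[\sum_{k=0}^{N-p}s(N-p,k)(-m)^{k}\right](2x)^{p}$: the terms differ by the factor $2^{p-N}$, so the sum you obtain is $2^{N}E^{(m)}_{N}\left(\frac{x}{2}\right)$, not $E^{(m)}_{N}(x)$. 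This is not recoverable bookkeeping; \eqref{seuler22} is false as stated. For $n=1$, $m=1$ one has $s(1,0)=0$, $s(1,1)=1$, so $\st^{(1)}_{1,0}=-1$ and the $(1,0)$ entry of $\st^{(1)}P[x]$ is $x-1$, while the corresponding entry of $\E(x)$ is $E_{1}(x)=x-\tfrac{1}{2}$. The paper's proof conceals exactly this error in the line where $x^{k-j}$ is silently rewritten as $2^{j-i}(2x)^{k-j}$ (these agree only when $k=i$). The identity becomes true only after renormalizing the definition \eqref{kind1} by $2^{j-i}$, i.e. taking $\st^{(m)}_{i,j}=\binom{i}{j}2^{j-i}\sum_{k}s(i-j,k)(-m)^{k}=\binom{i}{j}E^{(m)}_{i-j}(0)$, in which case \eqref{seuler22} reduces to $\E^{(m)}(x)=\E^{(m)}(0)P[x]$, a special case of Theorem \ref{teogeneuler3}.

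The same defect sinks the part you correctly flagged as the main obstacle: no bookkeeping can combine \eqref{kind2}--\eqref{kind3} into the bracket of \eqref{seuler11}, because any factorization $\E(x)=CP[x]$ forces $C=\E(0)=\E$, yet $\tilde{C}_{1,0}+\tilde{D}_{1,0}=0+(-1!\,S(1,1))=-1\neq E_{1}=-\tfrac{1}{2}$, so \eqref{sneuler22b} is also false as written. Your derivation of \eqref{sneuler22r} from the first two identities via $P^{-1}[x]=P[-x]$ is logically sound (and is in substance what the paper does), but it inherits the failure of its premises; worse, \eqref{sneuler22r} fails on its own at the $(2,0)$ entry, where $\st^{(1)}_{2,0}=\sum_{k}s(2,k)(-1)^{k}=2$ while $\tilde{C}_{2,0}+\tilde{D}_{2,0}=-\tfrac{1}{2}\,1!\,S(1,1)+1!\,S(2,1)=\tfrac{1}{2}$, so it cannot be rescued even granting both factorizations ``up to the same error.'' In short: your approach is the paper's approach, but the step equating the reindexed sum with \eqref{sneuler1} would fail, the theorem requires corrected $2$-power normalizations in \eqref{kind1}--\eqref{kind3} before either your computation or the paper's closes, and a referee-level check of the $(1,0)$ and $(2,0)$ entries exposes this immediately.
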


\begin{proof}
For $m\in \NN$ and $i\geq j$, let $A_{i,j}^{(m)}(x)$ be the $(i,j)$-th entry of the matrix product $\st^{(m)}P[x]$, then
$$\begin{aligned}
A_{i,j}^{(m)}(x)=&\sum_{k=j}^{i}\st^{(m)}_{i,k}\,p_{k,j}(x)=\sum_{k=j}^{i}\binom{i}{k}\binom{k}{j}\left[\sum_{r=0}^{i-k}s(i-k, r)(-m)^{r}\right]x^{k-j}\\
=&\sum_{k=j}^{i}\binom{i}{j}\binom{i-j}{k-j}2^{j-i}\left[\sum_{r=0}^{i-k}s(i-k, r)(-m)^{r}\right](2x)^{k-j}\\
=&\frac{\binom{i}{j}}{2^{i-j}}\sum_{k=0}^{i-j}\binom{i-j}{k}\left[\sum_{r=0}^{i-j-k}s(i-j-k, r)(-m)^{r}\right](2x)^{k}=\binom{i}{j}E_{i-j}^{(m)}(x),
\end{aligned}$$
The last equality is an immediate consequence of  \eqref{sneuler1}, and \eqref{seuler22} follows from the previous chain of equalities.

In order to prove \eqref{sneuler22b}, we proceed in a similar way to the previous one,  tanking into account  \eqref{seuler11}, \eqref{kind2}, \eqref{kind3}, and making the corresponding modifications.  Finally, the substitution $m=1$ into \eqref{seuler22} yields \eqref{sneuler22r}.
\end{proof}

\begin{definition}
\label{shift1}
The  $(n+1)\times(n+1)$ shifted Euler polynomial matrix $\tilde{\E}(x)$  are given by
\begin{equation}
\label{shift2}
\tilde{\E}_{i,j}(x)= \E_{i}(j+x), \quad 0\leq i,j\leq n.
\end{equation}
\end{definition}

Let us consider  the Vandermonde matrix:
$$\V(x):= \begin{bmatrix}
            1 & 1 & 1 & \cdots & 1 \\
            x & 1+x & 2+x & \cdots & n+x \\
            x^{2} &(1+x)^{2} & (2+x)^{2} & \cdots & (n+x)^{2} \\
            \vdots & \vdots & \vdots& \ddots & \vdots \\
            x^{n}& (1+x)^{n} & (2+x)^{n} & \cdots & (n+x)^{n}
          \end{bmatrix}.$$

In \cite[Theorem 2.1]{ChK2002}, the following factorization for the Vandermonde matrix $\V(x)$ was stated.
\begin{equation}
\label{veuler3}
\V(x)=  ([1]\oplus \tilde{S}_{n}) \Delta_{n+1}(x) P^{T}:=([1]\oplus \tilde{S}_{n}) \Delta_{n+1}(x) (P[1])^{T},
\end{equation}
where $\Delta_{n+1}(x)(P[1])^{T}$ represents the LU-factorization of a lower triangular matrix whose $(i,j)$-th entry is $\binom{x}{i-j}$, if $i\geq j$ and otherwise $0$.

The relation between the shifted Euler polynomial matrix $\tilde{\E}(x)$ and the matrices $\V(x)$ and  $\tilde{S}_{n}$ is contained in the following result.

\begin{teo}
\label{teogenvaneuler5}
The  shifted Euler polynomial matrix $\tilde{\E}(x)$  can be factorized in terms of the Vandermonde matrix $\V(x)$ and consequently, in terms of the factorial Stirling matrix $\tilde{S}_{n}$   as follows:
\begin{equation}
\label{veuler22}
\tilde{\E}(x)= \E \V(x),
\end{equation}
\begin{equation}
\label{veuler22c}
\tilde{\E}(x)= \E  ([1]\oplus \tilde{S}_{n}) \Delta_{n+1}(x) P^{T}.
\end{equation}
\end{teo}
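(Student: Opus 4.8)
The plan is to verify \eqref{veuler22} entrywise and then obtain \eqref{veuler22c} by substituting the known factorization \eqref{veuler3} of the Vandermonde matrix. First I would compute the $(i,j)$-th entry of the product $\E\,\V(x)$. Since the Euler matrix $\E$ has entries $\binom{i}{k}E_{i-k}$ for $i\geq k$ and $0$ otherwise, while the Vandermonde matrix satisfies $\V(x)_{k,j}=(j+x)^{k}$, the product reads
\[
\left(\E\,\V(x)\right)_{i,j}=\sum_{k=0}^{i}\binom{i}{k}E_{i-k}\,(j+x)^{k}.
\]

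Next I would recognize this sum as an Euler polynomial. Applying the expansion \eqref{euler9} of $E_{i}$ with argument $z=j+x$ gives
\[
E_{i}(j+x)=\sum_{k=0}^{i}\binom{i}{k}E_{k}\,(j+x)^{i-k},
\]
and the reflection $m=i-k$, for which $\binom{i}{i-m}=\binom{i}{m}$, turns the right-hand side into $\sum_{m=0}^{i}\binom{i}{m}E_{i-m}(j+x)^{m}$, which coincides exactly with $\left(\E\,\V(x)\right)_{i,j}$ computed above. By Definition \ref{shift1} this common value is precisely $\tilde{\E}_{i,j}(x)$, so $\tilde{\E}(x)=\E\,\V(x)$, establishing \eqref{veuler22}.

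Finally, to obtain \eqref{veuler22c} I would simply insert the factorization \eqref{veuler3}, namely $\V(x)=([1]\oplus\tilde{S}_{n})\,\Delta_{n+1}(x)\,P^{T}$, into \eqref{veuler22}, which yields $\tilde{\E}(x)=\E\,([1]\oplus\tilde{S}_{n})\,\Delta_{n+1}(x)\,P^{T}$. The only genuine content of the argument is the entrywise identification in the second step; the main obstacle there is keeping the binomial index bookkeeping straight, i.e.\ performing the reflection $k\mapsto i-k$ correctly so that the explicit formula \eqref{euler9} matches the product expansion. Everything else is a formal matrix substitution.
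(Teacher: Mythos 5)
Your proposal is correct and follows essentially the same route as the paper: both verify \eqref{veuler22} entrywise by expanding $E_{i}(j+x)$ via \eqref{euler9} (with the reindexing $k\mapsto i-k$ that you carry out explicitly and the paper performs implicitly), and both then obtain \eqref{veuler22c} by substituting the factorization \eqref{veuler3} into \eqref{veuler22}. No gaps; your bookkeeping of the binomial reflection matches the paper's chain of equalities exactly.
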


\begin{proof}
Let $\tilde{\E}_{i,j}(x)$ be the $(i,j)$-th entry of the shifted Euler polynomial matrix $\tilde{\E}(x)$. Then, using \eqref{euler9} we get
$$\tilde{\E}_{i,j}(x)= \E_{i}(j+x)=\sum_{k=0}^{i}\binom{i}{k}E_{i-k}(j+x)^{k}=\sum_{k=0}^{i}E_{i,k}\V_{k,j}(x).$$
Hence, \eqref{veuler22} follows from this chain of equalities. The relation \eqref{veuler22c} is a straightforward consequence of \eqref{veuler3}.
\end{proof}

\begin{remark}
Note that the relations \eqref{veuler22} and \eqref{veuler22c} are the analogous of \cite[Eqs. (37), (38)]{ZW2006}, respectively, in the context of Euler polynomial matrices.
\end{remark}

Finally, in the present paper, all matrix identities have been expressed using finite matrices. Since such matrix identities involve lower triangular matrices, they have a resemblance for infinite matrices. We state this property briefly as follows.

Let $\E^{(\alpha)}_{\infty}(x)$, $\E_{\infty}\left(x+\frac{1}{2}\right)$, $\E_{\infty}(x)$, $\E_{\infty}$, $\EE_{\infty}$, $\D_{\infty}$, $\tilde{\E}_{\infty}(x)$, $P_{\infty}[x]$, $\F_{\infty}$, $\F^{-1}_{\infty}$, $\Lu_{\infty}$, $\G_{\infty}[x]$, $\HH_{\infty}[x]$, $\M^{(\alpha)}_{\infty}(x)$,  $\M_{\infty}\left(x+\frac{1}{2}\right)$, $\N^{(\alpha)}_{\infty}(x)$, $\N_{\infty}\left(x+\frac{1}{2}\right)$, $\V_{\infty}$ and $\st^{(m)}_{\infty}$, be the infinite cases of the matrices $\E^{(\alpha)}(x)$, $\E\left(x+\frac{1}{2}\right)$, $\E(x)$, $\E$, $\EE$, $\D$, $\tilde{\E}(x)$, $P[x]$, $\F$, $\F^{-1}$, $\Lu$, $\G[x]$, $\HH[x]$, $\M^{(\alpha)}(x)$,  $\M\left(x+\frac{1}{2}\right)$, $\N^{(\alpha)}(x)$, $\N\left(x+\frac{1}{2}\right)$, $\V$ and $\st^{(m)}$ respectively. Then the following identities hold.
\begin{eqnarray*}
2\E^{(\alpha-1)}_{\infty}(x)&=& \E^{(\alpha)}_{\infty}(x+1)+\E^{(\alpha)}_{\infty}(x),\\
\E^{(\alpha)}_{\infty}(x+y) &=& \E^{(\alpha)}_{\infty}(x)P_{\infty}[y]=P_{\infty}[x]\E^{(\alpha)}_{\infty}(y)=\E^{(\alpha)}_{\infty}(y)P_{\infty}[x], \\
 \E_{\infty}(x+y) &=& P_{\infty}[x]  \E_{\infty}(y)=P_{\infty}[y]  \E_{\infty}(x), \\
\E_{\infty}(x) &=& P_{\infty}[x]\E_{\infty}, \\
\E_{\infty}\left(x+\frac{1}{2}\right) &=& P_{\infty}[x] \EE_{\infty}, \\
\left[\E_{\infty}\left(x+\frac{1}{2}\right)\right]^{-1} &=& \D_{\infty}P_{\infty}[-x], \\
\E_{\infty}\left(x+\frac{1}{2}\right) &=& \Lu_{\infty}\G_{\infty}[x]\EE_{\infty}=\HH_{\infty}[x]\Lu_{\infty}\EE_{\infty}, \\
 \E^{(\alpha)}_{\infty}(x)  &=& \F_{\infty}\M^{(\alpha)}_{\infty}(x)=\N^{(\alpha)}_{\infty}(x)\F_{\infty}, \\
 \M^{(\alpha)}_{\infty}(x) &=& \F^{-1}_{\infty}\N^{(\alpha)}_{\infty}(x)\F_{\infty},\\
  \left[\M_{\infty}\left(x+\frac{1}{2}\right)\right]^{-1} &=& \D_{\infty}P_{\infty}[-x] \F_{\infty},\\
  \left[\N_{\infty}\left(x+\frac{1}{2}\right)\right]^{-1} &=& \F_{\infty} \D_{\infty}P_{\infty}[-x],\\
\E^{(m)}_{\infty}(x)&=&\st_{\infty}P_{\infty}[x],\\
\tilde{\E}_{\infty}(x)&=& \E_{\infty} \V_{\infty}(x).
\end{eqnarray*}

\bigskip

%\begin{acknowledgements}
%The authors would like to express their gratitude to Professor %Jos\'e L. Ram\'{\i}rez for providing them some helpful academic %materials and for his useful comments and suggestions, which have %contributed to improve the presentation of this manuscript.

%\bigskip

%The first author (YQ) thanks the hospitality of Coordinaci\'on de %Matem\'aticas of Universidad del Atl\'antico during her visit for  %the Fall Semester 2016.
%\end{acknowledgements}

\end{document}